\newtheorem{theorem}{Theorem}
\newtheorem{remark}[theorem]{Remark}
\newtheorem{lemma}[theorem]{Lemma}
\newtheorem{corollary}[theorem]{Corollary}
\newtheorem{definition}[theorem]{Definition}
\newcommand{\eps}{\varepsilon}
\newcommand{\Om}{\Omega}
\newcommand*\diff{\mathop{}\!\mathrm{d}}
\numberwithin{theorem}{section} \numberwithin{equation}{section}
\title[Eigenvalue Problem]{Remarks On Eigenvalue Problems for fractional $p(\cdot)$-Laplacian}
\author[A. Bahrouni]{Anouar Bahrouni}
\address[A. Bahrouni]{Mathematics Department, University of Monastir,
Faculty of Sciences, 5019 Monastir, Tunisia}
\email{bahrounianouar@yahoo.fr}
\author[K. Ho]{Ky Ho}
\address[K. Ho]{Institute of Applied Mathematics, University of Economics Ho Chi Minh City, 59C, Nguyen Dinh Chieu Street, District 3, Ho Chi Minh City, Vietnam}
\email{\tt hnky81@gmail.com}
\subjclass[2010]{35D30, 35J20, 35J60, 35P15, 35P30, 35R11, 46E35.}
 \keywords{Fracional Sobolev spaces, variable exponents, eigenvalue problems, variational methods}
\begin{document}
\maketitle
\begin{abstract}
In this paper, we give some properties and remarks of the new
fractional Sobolev spaces with variable exponents. We also study the
eigenvalue problem involving the new fractional
$p(\cdot)$-Laplacian.
\end{abstract}
\section{Intoduction}
Recently, great attention has been focused on elliptic equations
involving fractional operators, both for pure mathematical research
and in view of concrete real-world applications. This type of
operator have major applications to various nonlinear problems,
including phase transitions, thin obstacle problem, stratified
materials, anomalous diffusion, crystal dislocation, soft thin
films, semipermeable membranes and flame propagation, ultra-relativistic limits of quantum mechanics, multiple scattering, minimal surfaces, material science, water waves, etc. We refer to %Di Nezza et al.
 \cite{an,8,BRR} for a comprehensive introduction to the study of nonlocal problems.\\

In recent years, the study of differential equations and variational
problems involving variable exponent conditions has been an
interesting topic. Lebesgue spaces with variable exponents appeared in the
literature in 1931 in the paper by Orlicz \cite{orlicz}. Zhikov
\cite{zi} started a new direction of investigation, which created the relationship between spaces with variable exponents and variational integrals with nonstandard growth conditions. For more details on Lebesgue and Sobolev spaces with variable exponents, we refer the reader to \cite{nonlinearity,radrep}.\\

  To our best knowledge, Kaufmann et al. \cite{krv2017} firstly introduced some results on fractional Sobolev spaces with variable exponent $W^{s,q(\cdot),p(\cdot,\cdot)}(\Omega)$ and the fractional $p(\cdot)$-Laplacian. There, the authors established compact embedding theorems of these
spaces into variable exponent Lebesgue spaces. As an application,
they also proved an existence result for nonlocal problems involving
the fractional $p(\cdot)$-Laplacian. In \cite{br2018}, Bahrouni and R\v{a}dulescu obtained some further qualitative properties of the fractional
Sobolev space $W^{s,q(\cdot),p(\cdot,\cdot)}(\Omega)$ and the fractional $p(\cdot)$-Laplacian. After that, some studies on this kind of problems have been
performed by using different approaches, see \cite{an2019,zz2017,HK}.\\

Let $\Omega$ be a bounded Lipschitz domain in $\mathbb{R}^{N}$. For any real
$s>0$ and for any functions $q(x)$ and $p(x,y)$, we want to define
the fractional Sobolev space with variable exponent. We start by
fixing $s\in (0,1)$,  $q\in C(\overline{\Omega},\mathbb{R})$, and
$p\in C(\overline{\Omega}\times \overline{\Omega},\mathbb{R})$. Throughout this paper, we assume that
\begin{equation}\tag{P}\label{P}
1<p(x,y)=p(y,x)<\frac{N}{s},\ \forall\, (x,y)\in \overline{\Omega}\times \overline{\Omega}
\end{equation}
and
\begin{equation}\tag{Q}\label{Q}
1<\displaystyle q(x)<
\frac{Np(x,x)}{N-sp(x,x)}=:p_s^{\ast}(x), \ \forall\, x \in \overline{\Omega}.
\end{equation}

 We define the fractional Sobolev space with variable exponents
$W^{s,q(\cdot),p(\cdot,\cdot)}(\Omega)$ as
\begin{align*}
W^{s,q(\cdot),p(\cdot,\cdot)}(\Omega)=\left\{u\in L^{q(\cdot)}(\Omega): \  \displaystyle \int_{\Omega
\times \Omega} \frac{|u(x)-u(y)|^{p(x,y)}}{\lambda^{p(x,y)}
|x-y|^{N+sp(x,y)}}\diff x \diff y <\infty\right\}.%, \ \ \mbox{for some} \ \ \lambda>0\right\}.
\end{align*}
Let
$$[u]_{s,p(\cdot,\cdot),\Om}=\inf\left\{\lambda>0: \ \displaystyle \int_{\Omega \times \Omega}
\frac{|u(x)-u(y)|^{p(x,y)}}{\lambda^{p(x,y)} |x-y|^{N+sp(x,y)}}\diff x\diff y
\leq 1\right\}$$ be the corresponding variable exponent Gagliardo
seminorm. In what follows, for brevity, we denote $W^{s,q(\cdot),p(\cdot,\cdot)}(\Omega)$ by $E$ for a general $q\in C(\overline{\Omega},\mathbb{R})$
 satisfying \eqref{Q} and by $W^{s,p(\cdot,\cdot)}(\Omega)$ when $q(x)=p(x,x)$ on $\overline{\Omega}$. Also, in some places we will write $p(x)$ instead of
 $p(x,x)$ and in this sense, $p\in C(\overline{\Omega},\mathbb{R})$. We equip $E$  with the norm
$$\|u\|_{E}=[u]_{s,p(\cdot,\cdot),\Om}+\|u\|_{L^{q(\cdot)}(\Om)}$$
(see Appendix %~\ref{Appendix}
 for the definitions of $L^{q(\cdot)}(\Omega)$ and  $\|\cdot\|_{L^{q(\cdot)}(\Om)}$). Then, $E$ becomes a reflexive and separable Banach space. The following embedding theorem was obtained in \cite{krv2017} for the case $q(x)>p(x,x)$ on $\overline{\Omega}$ and then was refined in \cite{zz2017,HK}.
\begin{theorem}\label{inj}
Let $\Omega \subset \mathbb{R}^{N}$ be a bounded Lipschitz domain and
let $s\in (0,1)$. Let $p\in C(\overline{\Omega}\times \overline{\Omega},\mathbb{R})$ and $q\in C(\overline{\Omega},\mathbb{R})$ satisfy $\eqref{P}$ and $\eqref{Q}$ with $q(x)\geq p(x,x)$ for all $x\in \overline{\Omega}$.
 Let $r\in C(\overline{\Omega},\mathbb{R})$ satisfy
 \begin{equation}\tag{R}\label{R}
 1<r(x)<p_s^{\ast}(x),\ \forall x \in \overline{\Omega}.
 \end{equation}
Then, there exists a constant
 $C=C(N,s,p,q,r,\Omega)$ such that
 $$\|f\|_{L^{r(\cdot)}(\Om)}\leq C\|f\|_{E},\  \forall f\in E.$$
 Thus, %$W^{s,q(x),p(x,y)}(\Omega)$
 $E$ is continuously
 embedded in $L^{r(\cdot)}(\Omega)$. Moreover, this embedding is compact.
\end{theorem}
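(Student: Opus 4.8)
The plan is to reduce to the case $q(x)=p(x,x)$, then to cut $\Omega$ into small pieces on which $p(\cdot,\cdot)$ is almost constant, and finally to invoke the classical constant‑exponent fractional Sobolev embedding on each piece. \emph{Reduction.} Since $\Omega$ is bounded and $p(x,x)\le q(x)$ on $\overline{\Omega}$, there is the continuous embedding $L^{q(\cdot)}(\Omega)\hookrightarrow L^{p(\cdot)}(\Omega)$; hence $\|u\|_{W^{s,p(\cdot,\cdot)}(\Omega)}=[u]_{s,p(\cdot,\cdot),\Omega}+\|u\|_{L^{p(\cdot)}(\Omega)}\le C\|u\|_{E}$, and it suffices to prove both assertions with $E$ replaced by $W^{s,p(\cdot,\cdot)}(\Omega)$. \emph{Localisation.} For each $x_0\in\overline{\Omega}$, since $r(x_0)<p_s^{\ast}(x_0)$ and $p$, $r$ and $\tau\mapsto\frac{N\tau}{N-s\tau}$ are continuous, choose $\delta_{x_0}>0$ so small that $\Omega\cap B_{\delta_{x_0}}(x_0)$ is a bounded Lipschitz domain and, with $\bar p_{x_0}:=\inf\{p(x,y):x,y\in\overline{B_{\delta_{x_0}}(x_0)}\}$ and $\bar r_{x_0}:=\sup\{r(x):x\in\overline{B_{\delta_{x_0}}(x_0)}\cap\overline{\Omega}\}$ (note $\bar p_{x_0}<N/s$), one has $\bar r_{x_0}<\frac{N\bar p_{x_0}}{N-s\bar p_{x_0}}$. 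Extracting a finite subcover of $\overline{\Omega}$ yields balls $B_i=B_{\delta_{x_i}}(x_i)$, $i=1,\dots,m$; write $\Omega_i:=B_i\cap\Omega$, $p_i:=\bar p_{x_i}$, $\tilde p_i:=\sup\{p(x,y):x,y\in\overline{\Omega_i}\}$, $r_i:=\bar r_{x_i}$, and fix $\sigma_i\in(0,s)$ so close to $s$ that still $r_i<\frac{Np_i}{N-\sigma_i p_i}=:p_i^{\ast}$.

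\emph{Key step: restriction maps $W^{s,p(\cdot,\cdot)}(\Omega)$ continuously into $W^{\sigma_i,p_i}(\Omega_i)$.} The $L^{p_i}(\Omega_i)$‑bound follows from $p_i\le p(x,x)$ and $|\Omega_i|<\infty$. For the Gagliardo seminorm put $v:=u(x)-u(y)$, $t:=|x-y|$ and $\theta:=p_i/p(x,y)\in[p_i/\tilde p_i,1]$ for $x,y\in\Omega_i$, and use the identity
\[
\frac{|v|^{p_i}}{t^{N+\sigma_i p_i}}=\left(\frac{|v|^{p(x,y)}}{t^{N+sp(x,y)}}\right)^{\theta}\,t^{\,N(\theta-1)+(s-\sigma_i)p_i},
\]
valid for all $t>0$. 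On $\{(x,y)\in\Omega_i\times\Omega_i:\ t\ge1\}$ bound the left‑hand side by $|v|^{p_i}\le 2^{p_i}(|u(x)|^{p_i}+|u(y)|^{p_i})$, integrable since $u\in L^{p_i}(\Omega_i)$. On $\{t<1\}$, if $\theta<1$ Young's inequality with exponents $1/\theta$ and $1/(1-\theta)$ turns the identity into
\[
\frac{|v|^{p_i}}{t^{N+\sigma_i p_i}}\le\frac{|v|^{p(x,y)}}{t^{N+sp(x,y)}}+t^{\,-N+\frac{(s-\sigma_i)p_i}{1-\theta}},
\]
and since $1-\theta\le 1-p_i/\tilde p_i$ the last exponent is at least $-N+c_i$ with $c_i:=\frac{(s-\sigma_i)p_i\tilde p_i}{\tilde p_i-p_i}>0$, so that term is $\le t^{-N+c_i}$ there; if $\theta=1$ the identity gives directly $\frac{|v|^{p_i}}{t^{N+\sigma_i p_i}}\le\frac{|v|^{p(x,y)}}{t^{N+sp(x,y)}}$. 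As $\int_{\Omega_i\times\Omega_i}t^{-N+c_i}\,\diff x\,\diff y<\infty$ (because $c_i>0$) and $\int_{\Omega\times\Omega}\frac{|v|^{p(x,y)}}{t^{N+sp(x,y)}}\,\diff x\,\diff y<\infty$ for $u\in W^{s,p(\cdot,\cdot)}(\Omega)$, integrating yields a bound on the modular of $u|_{\Omega_i}$ in $W^{\sigma_i,p_i}(\Omega_i)$; the standard relations between norm and modular then give the asserted continuity.

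\emph{Conclusion.} For each $i$ we have the chain
\[
W^{s,p(\cdot,\cdot)}(\Omega)\ \hookrightarrow\ W^{\sigma_i,p_i}(\Omega_i)\ \hookrightarrow\hookrightarrow\ L^{r_i}(\Omega_i)\ \hookrightarrow\ L^{r(\cdot)}(\Omega_i),
\]
where the middle, compact, arrow is the classical fractional Sobolev embedding on the bounded Lipschitz domain $\Omega_i$ (valid since $r_i<p_i^{\ast}$) and the last arrow holds since $|\Omega_i|<\infty$ and $r(\cdot)\le r_i$ on $\Omega_i$. Hence $u\mapsto u|_{\Omega_i}$ is continuous and compact from $W^{s,p(\cdot,\cdot)}(\Omega)$ to $L^{r(\cdot)}(\Omega_i)$. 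Since $\Omega\subseteq\bigcup_{i=1}^m\Omega_i$, the pointwise bound $|u(x)|^{r(x)}\le\sum_i\mathbf 1_{\Omega_i}(x)|u(x)|^{r(x)}$ gives $\int_\Omega|u|^{r(x)}\,\diff x\le\sum_i\int_{\Omega_i}|u|^{r(x)}\,\diff x$, so $u\in L^{r(\cdot)}(\Omega)$ and, using the equivalence of $\|\cdot\|_{L^{r(\cdot)}(\Omega)}$ with $\sum_i\|\cdot\|_{L^{r(\cdot)}(\Omega_i)}$ on a finite cover, $\|u\|_{L^{r(\cdot)}(\Omega)}\le C\sum_i\|u\|_{L^{r(\cdot)}(\Omega_i)}\le C\|u\|_{W^{s,p(\cdot,\cdot)}(\Omega)}\le C\|u\|_E$, which is the continuous embedding. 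For compactness, a bounded sequence in $E$ is bounded in $W^{s,p(\cdot,\cdot)}(\Omega)$; by compactness of each restriction and a diagonal extraction over $i=1,\dots,m$ we obtain a subsequence converging in every $L^{r(\cdot)}(\Omega_i)$, hence in $L^{r(\cdot)}(\Omega)$.

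The main obstacle is the key step. The variable exponent rules out a direct comparison at smoothness $s$ — with $\sigma_i=s$ the error exponent above is exactly $-N$, non‑integrable along the diagonal — so one must trade a little smoothness; the gain $s-\sigma_i>0$ is precisely what lifts the error exponent strictly above $-N$, while working on a small ball, where $p$ oscillates little, keeps this uniform and also permits the synchronisation with the subcriticality $r_i<p_i^{\ast}$ that the compact constant‑exponent embedding requires. A further point of care is the degenerate locus $\{p(x,y)=p_i\}\subset\Omega_i\times\Omega_i$, where the Young exponents blow up and one must fall back on the identity directly.
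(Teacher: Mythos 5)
The paper itself offers no proof of Theorem \ref{inj}: the result is quoted from \cite{krv2017} and its refinements \cite{zz2017,HK}, so there is no in-paper argument to compare against. Your proof is, in essence, the standard argument behind those references, and it is correct: reduce to $q(x)=p(x,x)$ using $q\geq p(\cdot,\cdot)$ and the boundedness of $\Omega$; cover $\overline{\Omega}$ by finitely many small sets on which $p$ oscillates so little that the subcriticality $r<p_s^{\ast}$ survives after replacing $p$ by its local infimum $p_i$ and $s$ by a slightly smaller $\sigma_i$; compare the variable-exponent modular with the constant-exponent Gagliardo modular via the pointwise identity and Young's inequality, the loss $s-\sigma_i>0$ being exactly what lifts the error kernel to $t^{-N+c_i}$ with $c_i>0$, hence locally integrable; finish with the classical continuous and compact embedding $W^{\sigma_i,p_i}(\Omega_i)\hookrightarrow\hookrightarrow L^{r_i}(\Omega_i)$ and a finite gluing/diagonal extraction. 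I checked the identity, the Young step (including your separate treatment of the degenerate locus $\theta=1$), the passage from modular bounds to norm bounds (boundedness of the linear restriction map on the unit ball suffices), and the gluing estimate via the triangle inequality over a disjoint refinement of the cover; all are sound. The one point worth tightening is the claim that $\Omega\cap B_{\delta_{x_0}}(x_0)$ is a Lipschitz domain: an arbitrary ball may meet $\partial\Omega$ tangentially and create a cusp. Either choose the radii generically, or (cleaner) replace balls by small open cubes adapted to the local Lipschitz charts of $\partial\Omega$, for which the intersection is manifestly Lipschitz and hence an extension domain, which is all the classical constant-exponent embedding requires.
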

Thanks to Theorem~\ref{inj}, under the assumptions $\eqref{P}$ and $\eqref{Q}$  with $q(x)\geq p(x,x)$ for all $x\in \overline{\Omega}$, spaces $E$ and $W^{s,p(\cdot,\cdot)}(\Omega)$ actually coincide. It is worth pointing out that for the seminorm localized on $\Om\times\Om$, there is no Poincar\'e-type inequality in general even for constant exponent case.
Because of this fact, $E$ is not suitable for studying the fractional $p(\cdot)$-Laplacian problem with Dirichlet boundary data $u=0$ in $\mathbb{R}^{N}\setminus \Omega$ via
variational methods and hence, we need to introduce another space as our solution space. In order to do this, invoking the continuity of $p$ on
$\overline{\Omega}\times \overline{\Omega}$ we extend $p$ to
$\mathbb{R}^{N}\times\mathbb{R}^{N}$ by
using Tietze extension theorem, such that $1<\inf_{(x,y)\in\mathbb{R}^{N}\times\mathbb{R}^{N}}p(x,y)\leq \sup_{(x,y)\in\mathbb{R}^{N}\times\mathbb{R}^{N}}p(x,y)<\frac{N}{s}$. We now define the following space:
$$X=\left\{u\in W^{s,p(\cdot,\cdot)}(\mathbb{R}^{N}): \ \ u=0 \ \ \mbox{on} \ \ \Omega^{c}\right\}$$
endowed with norm
$$\|u\|_{X}=[u]_{s,p(\cdot,\cdot),\mathbb{R}^N}+\|u\|_{L^{p(\cdot)}(\Om)},$$
where $ W^{s,p(\cdot,\cdot)}(\mathbb{R}^{N})$ and $[u]_{s,p(\cdot,\cdot),\mathbb{R}^N}$ are defined in the same ways as $ W^{s,p(\cdot,\cdot)}(\Omega)$ and $[u]_{s,p(\cdot,\cdot),\Omega}$ with $\Om$ replaced by $\mathbb{R}^N$. Obviously, $X$ is a closed subspace of $W^{s,p(\cdot,\cdot)}(\mathbb{R}^N)$ and hence, $(X,\|\cdot\|_X)$ is a reflexive and separable Banach space. \\%Moreover, we have the following compact embdding (see
%\cite{an2019, HK}):
%\begin{equation}\label{X.compact.imb}
%X\hookrightarrow\hookrightarrow L^{r(\cdot)}(\Omega)
%\end{equation}
%for any $r\in C(\overline{\Omega},\mathbb{R})$ satisfying \eqref{R}.\\

The first aim of our paper is to present some further basic results
both on the function spaces $E$ and $X$. Also, we try to improve
$X$ by giving an equivalent space (see Section~\ref{f-Sobolev spaces}).\\

Our second aim is the study of the eigenvalue problem:

\begin{equation}\label{ival}
\begin{cases}
(-\Delta)_{p(x)}^su+\alpha |u|^{p(x)-2}u+\beta |u|^{q(x)-2}u=\lambda |u|^{r(x)-2}u& \quad \text{in} \quad \Omega, \\
u=0& \quad \text{in} \quad \Bbb R^N\setminus \Omega,
\end{cases}
\end{equation}
where the operator $(-\Delta)_{p(\cdot)}^s$ is defined as
\begin{equation*}
(- \Delta)_{p(x)}^s\, u(x) := 2\ \lim_{\varepsilon \searrow 0} \int_{\mathbb{R}^N \setminus B(x,\varepsilon)} \frac{|u(x) - u(y)|^{p(x,y)-2}\, (u(x) - u(y))}{|x - y|^{N+sp(x,y)}}\, \diff y, \quad x \in \mathbb{R}^N,
\end{equation*}
where $B(x,\varepsilon):=\{z\in\mathbb{R}^N: |z-x|<\epsilon\};$ $\alpha,\beta$ are nonnegative real numbers; $\lambda>0$ is a real spectral parameter; and $r\in C(\overline{\Omega},\mathbb{R})$ satisfies \eqref{R}.

In particular, we deal with the existence, nonexistence of solutions for problem~\eqref{ival}. In the context of eigenvalue, problems involving variable exponent
represent a starting point in analyzing more complicated equations.
To our best knowledge, the first contribution in this sense is the paper by Fan et al. \cite{fan}. The authors established the existence of a sequence of eigenvalues for $p(.)$-Laplacian  $\operatorname{div}\left(|\nabla u|^{p(x)-2}\nabla u\right)$ subject to  Dirichlet boundary condition by using the Ljusternik-Schnirelmann theory. In \cite{mihai},  Mihailescu and R\v{a}dulescu studied an
eigenvalue problem with non-negative weight for the Laplace operator
on a bounded domain with smooth boundary in $\mathbb{R}^{N}, N =
3$. They showed the existence of two positive constants
$\lambda_{\ast}$ and $\lambda^{\ast}$ with $\lambda_{\ast} \leq
\lambda^{\ast}$ such that for any $\lambda \in (0,\lambda_{\ast})$
is not an eigenvalue of the problem while any $\lambda \in
(\lambda^{\ast},+\infty)$ is an eigenvalue of the problem.  Some
similar results for a class of fractional  $p(\cdot)$-Laplacian problems
involving multiple variable exponents can be found in \cite{shimi,chung}. All the aforementioned results treat only the existence of at least one solution for problem \eqref{ival} with $\alpha=1, \beta=0$ or $\alpha=0, \beta=1$. \\

This paper is organized as follows. In Section~\ref{f-Sobolev spaces}, we give some basic properties of fractional Sobolev spaces with variable exponents. In Sections~\ref{EP.Exist}, we deal with the eigenvalue problem using techniques in calculus of variations. Finally, in Appendix %~\ref{Appendix}
we give definitions and fundamental properties of the Lebesgue spaces with variable exponents.

\vspace{0.3cm}
\noindent\textbf{Notation}

%$p^+:=\sup_{\mathbb{R}^N\times \mathbb{R}^N}\, p(x,y)$ (after extending $p$ to the whole $\mathbb{R}^N\times \mathbb{R}^N$)

$p^+:=\sup_{\mathbb{R}^N\times \mathbb{R}^N}\, p(x,y)$, $p^-:=\inf_{\mathbb{R}^N\times \mathbb{R}^N}\, p(x,y)$

%For $h\in C(\Omega,\mathbb{R})$, $h^+:=\sup_{x\in\Omega}\, h(x)$ and $h^-:=\inf_{x\in\Omega}\, h(x)$

$q^+:=\sup_{x\in\Omega}\, q(x)$, $q^-:=\inf_{x\in\Omega}\, q(x)$

$r^+:=\sup_{x\in\Omega}\, r(x)$, $r^-:=\inf_{x\in\Omega}\, r(x)$

\section{Some remarks on fractional Sobolev spaces with variables exponents}\label{f-Sobolev spaces}
 Let $\alpha,\beta\geq 0$ with $\alpha+\beta>0.$ Then, on $E$ the norm $\|\cdot\|_E$ is
equivalent to the norm
 \begin{equation}\label{equnorm}
 \|u\|_1=\inf\left\{\mu> 0:\
\rho\left(\frac{u}{\mu}\right)\leq 1\right\},
\end{equation} where $\rho: E
\rightarrow \mathbb{R}$ is defined by
$$\rho(u)=\displaystyle \int_{\Omega \times \Omega}\frac{|u(x)-u(y)|^{p(x,y)}}{p(x,y)|x-y|^{N+sp(x,y)}}\diff x\diff y+\alpha\int_{\Omega}\frac{|u|^{p(x)}}{p(x)}\diff x+\beta\int_{\Omega}\frac{|u|^{q(x)}}{q(x)}\diff x,$$

\begin{lemma}\label{fmod}%\label{modular}
    Let $p\in C(\overline{\Omega}\times \overline{\Omega},\mathbb{R})$ and $q\in C(\overline{\Omega},\mathbb{R})$ satisfy $\eqref{P}$ and $\eqref{Q}$  with $q(x)\geq p(x,x)$ for all $x\in \overline{\Omega}$. Let $u\in E$, then the following holds:
    \begin{enumerate}
    \item[(i)]
        For $\gamma>0$, $\|u\|_1=\gamma$ if and only if $\rho(\frac{u}{\gamma})=1$;
    \item[(ii)]
        $\|u\|_1<1$ implies $\|u\|_1^{q^{+}}\leq \rho(u)\leq \|u\|_1^{p^{-}}$;
    \item[(iii)]
        $\|u\|_1>1$ implies $\|u\|_1^{p^{-}}\leq \rho(u)\leq \|u\|_1^{q^{+}}$.
    \end{enumerate}
\end{lemma}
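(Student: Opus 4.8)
\emph{Plan.} The entire lemma reduces to (i): as soon as we know that $\|u\|_1=\gamma$ is equivalent to $\rho(u/\gamma)=1$, parts (ii) and (iii) follow by a one-line scaling argument. I would therefore prove (i) first, the two ingredients being a two-sided homogeneity estimate for $\rho$ and the continuity and strict monotonicity of the function $t\mapsto\rho(tu)$ on $(0,\infty)$.

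First I would establish, for every $u\in E$ and every $t>0$, the estimate
\[
t^{p^-}\rho(u)\le\rho(tu)\le t^{q^+}\rho(u)\ \text{ for }t\ge1,\qquad t^{q^+}\rho(u)\le\rho(tu)\le t^{p^-}\rho(u)\ \text{ for }0<t\le1.
\]
In $\rho(tu)$ the three integrands pick up the factors $t^{p(x,y)}$, $t^{p(x)}$ and $t^{q(x)}$, and by \eqref{P}, \eqref{Q} and the assumption $q(x)\ge p(x,x)$ all these exponents lie in $[p^-,q^+]$; since $a\mapsto t^{a}$ is monotone one has $\min(t^{p^-},t^{q^+})\le t^{a}\le\max(t^{p^-},t^{q^+})$ for $a\in[p^-,q^+]$, and termwise integration yields the estimate. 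The same reasoning, applied to $u/\mu$ for $\mu>[u]_{s,p(\cdot,\cdot),\Omega}$ (so that the Gagliardo part of $\rho(u/\mu)$ is $\le1$, while the other two parts are finite because $\Omega$ is bounded, $p(x)=p(x,x)\le q(x)$, and $u\in L^{q(\cdot)}(\Omega)$), shows $\rho(u/\mu)<\infty$ and hence $\rho(u)<\infty$ for every $u\in E$; moreover $\rho(u)>0$ whenever $u\neq0$, because $\alpha+\beta>0$ forces one of the last two integrals to be strictly positive.

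Next I would record two properties of $g(t):=\rho(tu)$ for fixed $u\in E\setminus\{0\}$. Strict monotonicity: for $0<t_1<t_2$ and any $a>1$ we have $t_2^{a}-t_1^{a}>0$, so every integrand in $\rho(t_2u)-\rho(t_1u)$ is nonnegative, and since $\alpha+\beta>0$ and $\{u\neq0\}$ has positive measure the $\alpha$- or the $\beta$-term is strictly positive; thus $g(t_1)<g(t_2)$. Continuity: on any compact $t$-interval contained in $(0,\infty)$ the integrands of $\rho(tu)$ are dominated by a fixed multiple of the integrands of $\rho(u)$, which are integrable by the previous paragraph, so $g$ is continuous by dominated convergence. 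Combining this with the homogeneity estimate gives $g(t)\le t^{p^-}\rho(u)\to0$ as $t\to0^+$ and $g(t)\ge t^{p^-}\rho(u)\to\infty$ as $t\to\infty$; hence $\mu\mapsto\rho(u/\mu)$ is a continuous, strictly decreasing bijection of $(0,\infty)$ onto itself.

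Finally, for (i) (the case $u=0$ being trivial) the last observation produces a unique $\mu_0>0$ with $\rho(u/\mu_0)=1$, and $\rho(u/\mu)\le1$ precisely for $\mu\ge\mu_0$, so $\|u\|_1=\inf\{\mu>0:\rho(u/\mu)\le1\}=\mu_0$; hence $\|u\|_1=\gamma$ iff $\rho(u/\gamma)=1$. For (ii) and (iii) I set $v:=u/\|u\|_1$, so $\rho(v)=1$ by (i), and apply the homogeneity estimate to $u=\|u\|_1\,v$ with $t=\|u\|_1$: when $\|u\|_1<1$ the $0<t\le1$ case gives $\|u\|_1^{q^+}\le\rho(u)\le\|u\|_1^{p^-}$, and when $\|u\|_1>1$ the $t\ge1$ case gives $\|u\|_1^{p^-}\le\rho(u)\le\|u\|_1^{q^+}$. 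The only delicate point in the whole argument is upgrading the obvious inequality $\rho(u/\|u\|_1)\le1$ to the equality $\rho(u/\|u\|_1)=1$, which is precisely where finiteness of $\rho$ and continuity of $t\mapsto\rho(tu)$ enter; everything else is bookkeeping with the homogeneity estimate.
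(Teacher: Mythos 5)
Your proof is correct and takes essentially the same route as the paper, which simply omits the argument as a standard consequence of Proposition A.1 (the Luxemburg-norm/modular relations); you have written out that standard argument in full, including the continuity, strict monotonicity and surjectivity of $t\mapsto\rho(tu)$ needed to upgrade $\rho(u/\|u\|_1)\le 1$ to equality. The only point worth flagging is that your assertion that all exponents lie in $[p^-,q^+]$ tacitly requires $\sup_{(x,y)}p(x,y)\le q^+$, which the hypothesis $q(x)\ge p(x,x)$ (controlling $p$ only on the diagonal) does not by itself guarantee --- but this imprecision is already present in the paper's own statement of parts (ii) and (iii), so it is not a defect of your argument relative to the paper's.
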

\begin{proof}By invoking Proposition A.1 in Appendix, %Proposition~\ref{pr1}, 
	the proof can be obtained easily from the definition of norm $\|\cdot\|_1$ and modular $\rho$ and we omit it.
\end{proof}
Next we provide some more properties on the modular $\rho$. In what follows, $E^\ast$ (resp. $X^\ast$) denotes the dual space of $E$ (resp. $X$) and $\langle \cdot,\cdot \rangle$ denote the duality pairing between $E$ and $E^\ast$ (resp. $X$ and $X^\ast$).
\begin{lemma}\label{lemma_coercivity}%\label{coercive}
     Let $p\in C(\overline{\Omega}\times \overline{\Omega},\mathbb{R})$ and $q\in C(\overline{\Omega},\mathbb{R})$ satisfy $\eqref{P}$ and $\eqref{Q}$  with $q(x)\geq p(x,x)$ for all $x\in \overline{\Omega}$. Then the following properties hold.
    \begin{enumerate}
    \item[(i)]
        The functional $\rho$ is of class $C^{1}$ and its Fr\'echet derivative $\rho': E\to E^\ast$ is given by
        \begin{align*}
        \langle \rho'(u),\varphi\rangle&=\displaystyle \int_{\Omega \times
\Omega}\frac{|u(x)-u(y)|^{p(x,y)-2}(u(x)-u(y))(\varphi(x)-\varphi(y))}{|x-y|^{N+sp(x,y)}}\diff x\diff y\\
&+\alpha\int_{\Omega} |u|^{p(x)-2}u\varphi\diff x +\beta\int_{\Omega} |u|^{q(x)-2}u\varphi\diff x, \ \  \forall
u,\varphi \in E.
        \end{align*}
    \item[(ii)]
        The function $\rho'\colon E\to E^*$ is coercive, that is, $\frac{\langle\rho'(u),u\rangle}{\|u\|_1}\to +\infty$ as $\|u\|_1\to +\infty$.
    \end{enumerate}
\end{lemma}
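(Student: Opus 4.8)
Here is the approach I would take.

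\textbf{Proof proposal.} The plan is to split $\rho=A+B+C$, where
\[
A(u)=\int_{\Omega\times\Omega}\frac{|u(x)-u(y)|^{p(x,y)}}{p(x,y)|x-y|^{N+sp(x,y)}}\diff x\diff y,\qquad B(u)=\alpha\into\frac{|u|^{p(x)}}{p(x)}\diff x,\qquad C(u)=\beta\into\frac{|u|^{q(x)}}{q(x)}\diff x,
\]
and handle the three terms separately. For $B$ and $C$ I would invoke the standard theory of modulars on variable exponent Lebesgue spaces: $B$ is $C^1$ on $L^{p(\cdot)}(\Omega)$ with $\langle B'(u),\varphi\rangle=\alpha\into|u|^{p(x)-2}u\varphi\diff x$, and $C$ is $C^1$ on $L^{q(\cdot)}(\Omega)$ with the analogous formula, the point being that the Nemytskii operators $u\mapsto|u|^{p(x)-2}u$ and $u\mapsto|u|^{q(x)-2}u$ are continuous into $L^{p'(\cdot)}(\Omega)$ and $L^{q'(\cdot)}(\Omega)$ respectively. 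Since $q(x)\ge p(x,x)$ and $\Omega$ is bounded, $E$ embeds continuously in both $L^{p(\cdot)}(\Omega)$ and $L^{q(\cdot)}(\Omega)$, so composing with these embeddings yields $B,C\in C^1(E)$ with the derivatives as stated.

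The substantive part is the seminorm term $A$. I would view it on the product space $Q=\Omega\times\Omega$ equipped with the measure $\diff\mu=|x-y|^{-N-sp(x,y)}\diff x\diff y$, which is $\sigma$-finite (exhaust $Q$ by the sets $\{|x-y|>1/k\}$), and write $U_v(x,y):=v(x)-v(y)$ for $v\in E$, so that $A(u)=\int_Q\frac{|U_u(x,y)|^{p(x,y)}}{p(x,y)}\diff\mu$ and, by the very definition of the seminorm, $\|U_v\|_{L^{p(\cdot,\cdot)}(Q,\mu)}=[v]_{s,p(\cdot,\cdot),\Omega}\le\|v\|_E$. Finiteness of $A$ on $E$ is immediate since $E=W^{s,p(\cdot,\cdot)}(\Omega)$. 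For Gâteaux differentiability I would fix $u,\varphi\in E$ and differentiate under the integral sign in $t\mapsto A(u+t\varphi)$: the pointwise $t$-derivative of the integrand is bounded for $|t|\le1$, via Young's inequality together with $1<p^-\le p(x,y)\le p^+<\infty$, by a constant times $|U_u(x,y)|^{p(x,y)}+|U_\varphi(x,y)|^{p(x,y)}$, which is $\mu$-integrable; dominated convergence then produces exactly the first line of the asserted formula for $\langle\rho'(u),\varphi\rangle$, and the same Hölder estimate shows $A'(u)\in E^*$. To upgrade to $C^1$ it remains to prove $A'\colon E\to E^*$ is continuous. If $u_n\to u$ in $E$, then $U_{u_n}\to U_u$ in $L^{p(\cdot,\cdot)}(Q,\mu)$ (because $\|U_{u_n}-U_u\|_{L^{p(\cdot,\cdot)}(Q,\mu)}=[u_n-u]_{s,p(\cdot,\cdot),\Omega}\le\|u_n-u\|_E$), and Hölder's inequality in $L^{p(\cdot,\cdot)}(Q,\mu)$ gives
\[
|\langle A'(u_n)-A'(u),\varphi\rangle|\le C\,\bigl\||U_{u_n}|^{p(x,y)-2}U_{u_n}-|U_u|^{p(x,y)-2}U_u\bigr\|_{L^{p'(\cdot,\cdot)}(Q,\mu)}\,\|\varphi\|_E;
\]
the first factor tends to $0$ by the continuity of the Nemytskii operator $t\mapsto|t|^{p(x,y)-2}t$ from $L^{p(\cdot,\cdot)}(Q,\mu)$ into $L^{p'(\cdot,\cdot)}(Q,\mu)$ (the growth exponent being $p(x,y)-1=p(x,y)/p'(x,y)$), which is the standard Nemytskii-continuity result for variable exponent Lebesgue spaces over a $\sigma$-finite measure space. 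Hence $A'$ is continuous; and since a Gâteaux-differentiable functional with continuous Gâteaux derivative is Fréchet differentiable, $\rho=A+B+C$ is of class $C^1$ with the asserted derivative. I expect this final continuity step — in particular checking that the weighted, diagonally singular measure $\mu$ fits the hypotheses of the variable-exponent Nemytskii theory — to be the main technical obstacle; everything else is bookkeeping.

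For (ii), evaluating $\rho'$ at $u$ gives $\langle\rho'(u),u\rangle=\int_Q|U_u|^{p(x,y)}\diff\mu+\alpha\into|u|^{p(x)}\diff x+\beta\into|u|^{q(x)}\diff x$. Since $p(x,y)\ge p^-$, $p(x)=p(x,x)\ge p^-$ and $q(x)\ge p(x,x)\ge p^-$, each weight $\tfrac1{p(x,y)},\tfrac1{p(x)},\tfrac1{q(x)}$ occurring in $\rho$ is at most $\tfrac1{p^-}$, so $\rho(u)\le\tfrac1{p^-}\langle\rho'(u),u\rangle$, that is $\langle\rho'(u),u\rangle\ge p^-\rho(u)$. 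By Lemma~\ref{fmod}(iii), $\|u\|_1>1$ implies $\rho(u)\ge\|u\|_1^{p^-}$, whence $\frac{\langle\rho'(u),u\rangle}{\|u\|_1}\ge p^-\|u\|_1^{p^--1}\to+\infty$ as $\|u\|_1\to+\infty$, because $p^->1$. This establishes the coercivity claim.
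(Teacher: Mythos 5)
Your proof is correct and follows essentially the same route as the paper: for (i) the paper simply defers to the standard argument in \cite{br2018}, which is exactly the decomposition and Nemytskii/dominated-convergence argument you carry out in detail, and for (ii) the paper uses the same chain $\langle\rho'(u),u\rangle\geq\rho(u)\geq\|u\|_1^{p^-}$ via Lemma~\ref{fmod} (your sharper constant $p^-$ in $\langle\rho'(u),u\rangle\geq p^-\rho(u)$ is a harmless refinement).
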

\begin{proof}
    (i) This is standard (see \cite{br2018}).\\

    (ii) By Lemma \ref{fmod}, for $\|u\|_1>1$, we obtain
    \begin{equation*}
    \langle\rho'(u),u\rangle\geq \rho(u)\geq \|u\|_1^{p^{-}}
    \end{equation*}
    and hence, the conclusion follows.
    \end{proof}
\begin{remark}\rm
The above results still hold true if we replace $E$ by $X$ with $\int_{\Omega \times
    \Omega}$ replaced by $\int_{\mathbb{R}^N \times
    \mathbb{R}^N}$.
\end{remark}

As we mentioned in the introduction, when $\alpha=\beta=0$, we need
to use the space $X$ instead $E$ to study problem \eqref{ival} via
variational methods. For this purpose, in the rest of
this section we will provide further properties for the space $X$. As we discussed in the introduction, for $p\in C(\overline{\Omega}\times \overline{\Omega},\mathbb{R})$ satisfying \eqref{P} we can extend $p$ the whole space $\mathbb{R}^{N}\times \mathbb{R}^{N}$ to have $p\in
C(\mathbb{R}^{N}\times \mathbb{R}^{N},\mathbb{R})$ satisfying \eqref{P} with $\Omega=\mathbb{R}^{N}$. We introduce a new norm on $X$ as follows:
$$
\aligned \|u\|_{0}
%&=\inf\left\{\lambda>0 : \int_{\mathcal{Q}} \frac{|u(x)-u(y)|^{p(x, y)}}{\lambda^{p(x, y)}|x-y|^{N+s p(x, y)}} \diff x \diff y \leq 1\right\}\\
&=\inf\left\{\lambda>0 : \int_{\mathbb{R}^{N}}\int_{\mathbb{R}^{N}}
\frac{|u(x)-u(y)|^{p(x, y)}}{\lambda^{p(x, y)}|x-y|^{N+s p(x, y)}}
\diff x \diff y \leq 1\right\}.
\endaligned
$$
\begin{lemma}\label{lemma2-6}
%Let $0<s<1$. Assume that the variable exponent $p$ satisfies $(P)$
%\overline{\Omega}$. Then the functional $\rho_{X_{0}}: X_{0}\to \mathbb{R}$
The functional $M:\, X\to \mathbb{R}$ defined by
$$
M(u)=\int_{\mathbb{R}^{N}}\int_{\mathbb{R}^{N}}
 \frac{|u(x)-u(y)|^{p(x, y)}}{|x-y|^{N+s p(x, y)}} \diff x\diff y
 $$
has the following properties:
\begin{enumerate}
     \item[(i)] for $\alpha> 0$, $\|u\|_0=(>,<)\,\alpha $ if and only if $M(\frac{u}{\alpha})=(>,<)\, 1$;
  \item [(ii)]if $\| u\|_0>1$, then
 $\| u\|_0^{p^{-}} \leq
M(u) \leq \| u\|_0^{p^{+}}$.
 \item[(iii)]
 if $\| u\|_0<1$, then
 $\| u\|_0^{p^{+}} \leq
M(u) \leq \| u\|_0^{p^{-}}$.
\end{enumerate}
\end{lemma}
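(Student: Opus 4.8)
The plan is to treat $M$ as a convex modular and $\|\cdot\|_0$ as the Luxemburg (gauge) functional it generates, which is exactly the Musielak--Orlicz situation underlying Proposition~A.1 of the Appendix; the argument runs in close parallel to the proof of Lemma~\ref{fmod}. We may assume $u\neq 0$, since for $u=0$ all three statements are immediate ($\|u\|_0=0$, $M(u)=0$). Set
\[
g(\lambda):=M\!\left(\frac{u}{\lambda}\right)=\int_{\mathbb{R}^{N}}\int_{\mathbb{R}^{N}}\frac{|u(x)-u(y)|^{p(x,y)}}{\lambda^{p(x,y)}\,|x-y|^{N+sp(x,y)}}\,\diff x\,\diff y ,\qquad \lambda>0 .
\]
First I would record the elementary properties of $g$. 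For each fixed pair $(x,y)$ the integrand is continuous and strictly decreasing in $\lambda$, vanishes as $\lambda\to\infty$ and blows up as $\lambda\to 0^{+}$ whenever $u(x)\neq u(y)$. Because $u\in X\subset W^{s,p(\cdot,\cdot)}(\mathbb{R}^{N})$ we have $g(\lambda_{1})<\infty$ for some $\lambda_{1}>0$, and for $0<\lambda\le\lambda_{1}$ the estimate $(\lambda_{1}/\lambda)^{p(x,y)}\le(\lambda_{1}/\lambda)^{p^{+}}$ gives $g(\lambda)\le(\lambda_{1}/\lambda)^{p^{+}}g(\lambda_{1})<\infty$; hence $g$ is finite on all of $(0,\infty)$. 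By the monotone convergence theorem $g$ is continuous and strictly decreasing on $(0,\infty)$, with $g(\lambda)\to0$ as $\lambda\to\infty$ and $g(\lambda)\to+\infty$ as $\lambda\to0^{+}$ (the limit integrand is $+\infty$ on the set $\{u(x)\neq u(y)\}$, which has positive measure because $u$ vanishes on $\Omega^{c}$ but not identically).

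These facts force the sublevel set $\{\lambda>0:\ g(\lambda)\le 1\}$ to be a half-line $[\lambda_{0},\infty)$ with $g(\lambda_{0})=1$, so by the definition of $\|\cdot\|_0$ we get $\|u\|_0=\lambda_{0}$ and, in particular,
\[
M\!\left(\frac{u}{\|u\|_0}\right)=1 .
\]
Statement (i) is then read off from the strict monotonicity of $g$: $\|u\|_0=\alpha\iff\lambda_{0}=\alpha\iff g(\alpha)=1$, and likewise $\|u\|_0>\alpha\iff g(\alpha)>1$ and $\|u\|_0<\alpha\iff g(\alpha)<1$.

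For (ii) and (iii) I would simply compare exponents in the identity $M(u/\|u\|_0)=1$. If $\|u\|_0>1$ then $\|u\|_0^{\,p^{-}}\le\|u\|_0^{\,p(x,y)}\le\|u\|_0^{\,p^{+}}$ pointwise, so dividing the integrand of $M(u)$ by $\|u\|_0^{\,p(x,y)}$ and integrating yields $\|u\|_0^{-p^{+}}M(u)\le 1\le\|u\|_0^{-p^{-}}M(u)$, i.e.\ $\|u\|_0^{\,p^{-}}\le M(u)\le\|u\|_0^{\,p^{+}}$. If instead $\|u\|_0<1$ the inequalities between the powers reverse, $\|u\|_0^{\,p^{+}}\le\|u\|_0^{\,p(x,y)}\le\|u\|_0^{\,p^{-}}$, and the same computation gives $\|u\|_0^{\,p^{+}}\le M(u)\le\|u\|_0^{\,p^{-}}$.

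The only genuinely non-formal point is the claim that the infimum defining $\|u\|_0$ is attained, equivalently $M(u/\|u\|_0)=1$; everything after that is bookkeeping with the exponents $p^{-}\le p(x,y)\le p^{+}$. The mild obstacle there is the continuity and the boundary behaviour of $g$, which I settle by the monotone convergence theorem together with the observation that $u\in X$ makes $g$ finite everywhere and $u\not\equiv 0$ on a set that guarantees $g(0^{+})=+\infty$. Alternatively, one may just invoke Proposition~A.1 of the Appendix applied to the modular $M$, exactly as was done for $\rho$ in Lemma~\ref{fmod}.
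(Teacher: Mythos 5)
Your proposal is correct and follows essentially the same route as the paper, which simply invokes Proposition~A.1 (the standard modular--Luxemburg norm relationship) exactly as you note in your closing remark; your write-up just makes explicit the continuity, strict monotonicity, and attainment argument that underlies that proposition. The only cosmetic point is that continuity of $g$ from the left in $\lambda$ uses dominated rather than monotone convergence, but this does not affect the argument.
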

\begin{proof}
It is a direct consequence of Proposition A.1.% Proposition~\ref{pr1}.
\end{proof}
Now, we prove the following compact embedding type result by employing some ideas in  \cite{an2019}.
\begin{theorem}\label{th2-8}
Let $\Omega \subset \mathbb{R}^{N}$ be a bounded Lipschitz domain and
let $s\in (0,1)$. Let $ p\in
C(\mathbb{R}^{N}\times \mathbb{R}^{N},\mathbb{R})$ satisfy $\eqref{P}$
(with $\Omega=\mathbb{R}^{N}$). Then, for any $r\in
C(\overline{\Omega},\mathbb{R})$ satisfying \eqref{R}, there exists a constant $C>0$ such that
 \begin{equation}\label{eq2-5}
 \|u\|_{L^{r(\cdot)}(\Omega)}\leq C\|u\|_0,\quad \forall u\in X.
\end{equation}
Moreover, the embedding $X\hookrightarrow L^{r(\cdot)}(\Omega)$ is compact.
\end{theorem}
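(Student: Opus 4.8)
The plan is to deduce both the continuous embedding \eqref{eq2-5} and its compactness from the already-established Theorem~\ref{inj} applied on the bounded domain $\Omega$, via the following comparison of seminorms. First I would observe that for $u\in X$, since $u\equiv 0$ on $\Omega^{c}$, one has
\[
\int_{\Omega\times\Omega}\frac{|u(x)-u(y)|^{p(x,y)}}{|x-y|^{N+sp(x,y)}}\diff x\diff y \le \int_{\mathbb{R}^{N}\times\mathbb{R}^{N}}\frac{|u(x)-u(y)|^{p(x,y)}}{|x-y|^{N+sp(x,y)}}\diff x\diff y,
\]
so that $[u]_{s,p(\cdot,\cdot),\Omega}\le [u]_{s,p(\cdot,\cdot),\mathbb{R}^{N}}$, and I would also bound $\|u\|_{L^{p(\cdot)}(\Omega)}$ by the full Gagliardo seminorm. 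For the latter, the standard device is to pick a bounded Lipschitz set $\Omega'$ with $\overline{\Omega}\subset\Omega'$ and a ball $B\subset\Omega'\setminus\overline{\Omega}$; then for $x\in\Omega$ and $y\in B$ we have $u(y)=0$, $|x-y|$ bounded above by $\operatorname{diam}(\Omega')$, so
\[
|u(x)|^{p(x)}\le C\int_{B}\frac{|u(x)-u(y)|^{p(x,y)}}{|x-y|^{N+sp(x,y)}}\diff y \quad\text{for a.e. }x\in\Omega,
\]
and integrating in $x$ over $\Omega$ gives $\int_{\Omega}|u|^{p(x)}\diff x\le C\,M(u)$. Combining this with Lemma~\ref{lemma2-6} to convert between the modular $M$ and the norm $\|\cdot\|_{0}$, and with the modular–norm relations for $L^{p(\cdot)}(\Omega)$ from Proposition~A.1, yields a constant $C>0$ with $\|u\|_{W^{s,p(\cdot,\cdot)}(\Omega)}=[u]_{s,p(\cdot,\cdot),\Omega}+\|u\|_{L^{p(\cdot)}(\Omega)}\le C\|u\|_{0}$ for all $u\in X$; note $p$ restricted to $\overline{\Omega}\times\overline{\Omega}$ satisfies \eqref{P} and, taking $q(x)=p(x,x)$, also \eqref{Q}, so $X$ embeds continuously into $E=W^{s,p(\cdot,\cdot)}(\Omega)$.

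Granting this, the continuous embedding \eqref{eq2-5} is immediate: by Theorem~\ref{inj} (with $q(x)=p(x,x)$, which satisfies $q(x)\ge p(x,x)$ trivially) we have $\|u\|_{L^{r(\cdot)}(\Omega)}\le C'\|u\|_{E}\le C'C\|u\|_{0}$ for every $r$ satisfying \eqref{R}. For compactness, I would take a sequence $(u_{n})$ bounded in $X$; since $X$ is reflexive, pass to a subsequence with $u_{n}\weak u$ in $X$. The bound $\|u_{n}\|_{E}\le C\|u_{n}\|_{0}$ shows $(u_{n})$ is bounded in $E$, and the linear continuous inclusion $X\hookrightarrow E$ sends the weakly convergent sequence to a weakly convergent one, $u_{n}\weak u$ in $E$. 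Now apply the \emph{compact} part of Theorem~\ref{inj}: from any bounded sequence in $E$ one extracts a strongly convergent subsequence in $L^{r(\cdot)}(\Omega)$, and the limit must coincide with $u$. Hence $u_{n}\to u$ strongly in $L^{r(\cdot)}(\Omega)$ along a subsequence, which is exactly compactness of $X\hookrightarrow L^{r(\cdot)}(\Omega)$.

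The main obstacle is the Poincaré-type step producing $\int_{\Omega}|u|^{p(x)}\diff x\le C\,M(u)$: one must handle the variable exponent $p(x,y)$ carefully (splitting according to whether $|u(x)-u(y)|\le 1$ or $>1$, and using continuity of $p$ on the compact set $\overline{\Omega}\times\overline{B}$ to get uniform bounds $p^{-}\le p(x,y)\le p^{+}$ there), and one must also keep track of whether $M(u)$ or the pieces are $\le 1$ or $>1$ when converting to the norm $\|\cdot\|_{0}$ through Lemma~\ref{lemma2-6}. A cleaner route, and the one I would actually write, is to prove the seminorm comparison $[u]_{s,p(\cdot,\cdot),\Omega}+\|u\|_{L^{p(\cdot)}(\Omega)}\le C\|u\|_{0}$ directly at the level of norms by a normalization/scaling argument (replace $u$ by $u/\|u\|_{0}$ so that $M(u/\|u\|_{0})\le 1$), which sidesteps the case analysis, and then quote Theorem~\ref{inj} verbatim. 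Everything else — reflexivity of $X$, preservation of weak convergence under a continuous linear map, and extraction of a strongly convergent subsequence — is routine.
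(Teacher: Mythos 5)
Your overall architecture --- reduce everything to Theorem~\ref{inj} on $\Omega$ by first showing $\|u\|_{W^{s,p(\cdot,\cdot)}(\Omega)}\le C\|u\|_0$, then obtain continuity and compactness of $X\hookrightarrow L^{r(\cdot)}(\Omega)$ by composition --- is exactly the paper's, and the observations that $[u]_{s,p(\cdot,\cdot),\Omega}\le\|u\|_0$ and that compactness follows once the continuous embedding $X\hookrightarrow W^{s,p(\cdot,\cdot)}(\Omega)$ is in hand are correct. The gap is in the one step that carries all the content: the Poincar\'e-type inequality $\|u\|_{L^{p(\cdot)}(\Omega)}\le C\|u\|_0$ (inequality \eqref{eq2-6} in the paper). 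Your pointwise estimate
\[
|u(x)|^{p(x)}\le C\int_{B}\frac{|u(x)-u(y)|^{p(x,y)}}{|x-y|^{N+sp(x,y)}}\diff y
=C\int_{B}\frac{|u(x)|^{p(x,y)}}{|x-y|^{N+sp(x,y)}}\diff y
\]
is false in general: the exponent on the left is $p(x,x)$, while on the right it is $p(x,y)$ with $y$ ranging over a ball $B$ at \emph{positive distance} from $\Omega$, and continuity of $p$ gives no relation between $p(x,x)$ and $p(x,y)$ for such $y$ beyond both lying in $[p^-,p^+]$. If $p(x,x)>\sup_{y\in B}p(x,y)$ on a set of positive measure (perfectly compatible with \eqref{P} and the symmetry of $p$), then where $|u(x)|$ is large the left-hand side grows like $|u(x)|^{p(x,x)}$ while the right-hand side grows only like $|u(x)|^{\sup_{y\in B}p(x,y)}$, so no constant works. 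The normalization $v=u/\|u\|_0$ does not repair this: it only yields $M(v)\le 1$, hence (via the $\Omega\times B$ piece) a bound on $\int_{\Omega}\min\{|v|^{p^-},|v|^{p^+}\}\diff x$, i.e.\ on $\int_{\{|v|>1\}}|v|^{p^-}\diff x$; but $\|v\|_{L^{p(\cdot)}(\Omega)}\le C$ requires a bound on $\int_{\{|v|>1\}}|v|^{p(x,x)}\diff x$, which can behave like $\int_{\{|v|>1\}}|v|^{p^+}\diff x$. Nor can you close the loop with Theorem~\ref{inj}, since that theorem requires $q(x)\ge p(x,x)$ and hence cannot be invoked with the constant exponent $p^-$ in the role of $q$. (Your estimate is the right one in the constant-exponent case, which is presumably where the intuition comes from; the case analysis you flag is not the obstacle --- the exponent mismatch is.)

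The paper obtains \eqref{eq2-6} by an entirely different, non-constructive route: minimize $\|u\|_0$ over $\mathcal{A}=\{u\in X:\|u\|_{L^{p(\cdot)}(\Omega)}=1\}$, use $[u]_{s,p(\cdot,\cdot),\Omega}\le\|u\|_0$ to get boundedness of a minimizing sequence in $W^{s,p(\cdot,\cdot)}(\Omega)$, extract a weak limit $u_0$, use the compact embedding of Theorem~\ref{inj} to conclude $\|u_0\|_{L^{p(\cdot)}(\Omega)}=1$ (so $u_0\ne 0$), and apply Fatou's lemma together with Lemma~\ref{lemma2-6} to show that the infimum equals $\|u_0\|_0>0$. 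To salvage a direct argument of your type you would need an extra structural hypothesis on $p$ (e.g.\ $p(x,x)\le p(x,y)$ for all $y$), which the theorem does not assume. Everything downstream of \eqref{eq2-6} in your write-up is fine, but as stated the proof of the key inequality does not go through.
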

\begin{proof}
First, we claim that there exists a constant $C_{0}>0$ such that
\begin{equation}\label{eq2-6}
C_{0}\|u\|_{L^{p(\cdot)}(\Omega)}\leq \|u\|_0,\quad \forall
u\in X,
\end{equation}
%where $q\in C_{+}(\bar{\Omega})$ satisfies $p(x)\leq q(x)<p_{s}^{*}(x)$ for all $x\in \bar{\Omega}$.
To this end, let
$\mathcal{A}=\{u\in X:\|u\|_{L^{p(\cdot)}(\Omega)}=1\}$. Take
a sequence $\{u_{n}\}\subset \mathcal{A}$ such that
$\lim\limits_{n\to \infty}\|u_{n}\|_0=\inf\limits_{u\in
\mathcal{A}}\|u\|_0$. So, $\{u_{n}\}$ is bounded in
$L^{p(\cdot)}(\Omega)$ and $X_0$. Hence, $\{u_{n}\}$ is bounded in
$W^{s,p(\cdot,\cdot)}(\Omega)$. Up to a subsequence, there exist a
subsequence of $\{u_{n}\}$, still denote by $\{u_{n}\}$, and
$u_{0}\in W^{s,p(\cdot,\cdot)}(\Omega)$ such that
$u_{n}\rightharpoonup u_{0}$ %weakly
 in $W^{s,p(\cdot,\cdot)}(\Omega)$. By Theorem \ref{inj}, we get that
$u_{n}\to u_{0}$ %strongly
in $L^{p(\cdot)}(\Omega)$ and
$\|u_{0}\|_{L^{p(\cdot)}(\Omega)}=1$. Now, we extend $u_{0}$ to
$\mathbb{R}^{N}$ by setting $u_{0}=0$ in
$\mathbb{R}^{N}\backslash\Omega$. This implies $u_{n}(x)\to u_{0}(x)$
a.e. in $\mathbb{R}^{N}$ as $n\to \infty$. Hence, by Fatou's Lemma,
we have
\begin{equation*}
\int_{\mathbb{R}^{N}}\int_{\mathbb{R}^{N}}
\frac{|u_{0}(x)-u_{0}(y)|^{p(x, y)}}{|x-y|^{N+s p(x, y)}} \diff x\diff y
\leq \liminf\limits_{n\to \infty}
\int_{\mathbb{R}^{N}}\int_{\mathbb{R}^{N}}
\frac{|u_{n}(x)-u_{n}(y)|^{p(x, y)}}{|x-y|^{N+s p(x, y)}} \diff x\diff y,
\end{equation*}
which joining with $\|u_{0}\|_{L^{p(\cdot)}(\Omega)}=1$ implies that $u_0\in \mathcal{A}$. Set $\lambda_0:=\inf\limits_{u\in
    \mathcal{A}}\|u\|_0$ and $\lambda_n:=\|u_{n}\|_0$ ($n=1,2\cdots$). From the fact that $u_n\in \mathcal{A}$
     we have that $\lambda_n>0$ for $n=0,1,2,\cdots$ and hence, by Lemma~\ref{lemma2-6} and by Fatou's Lemma again, we have
\begin{align*}
\int_{\mathbb{R}^{N}}\int_{\mathbb{R}^{N}}&
\frac{|u_{0}(x)-u_{0}(y)|^{p(x, y)}}{\lambda_0^{p(x,y)}|x-y|^{N+s p(x, y)}} \diff x\diff y\\
&
\leq \liminf\limits_{n\to \infty}
\int_{\mathbb{R}^{N}}\int_{\mathbb{R}^{N}}
\frac{|u_{n}(x)-u_{n}(y)|^{p(x, y)}}{\lambda_n^{p(x,y)}|x-y|^{N+s p(x, y)}} \diff x\diff y=1.
\end{align*}
This and Lemma~\ref{lemma2-6} yield
$$\|u_{0}\|_0\leq \lambda_0=\inf\limits_{u\in
    \mathcal{A}}\|u\|_0.$$
Therefore, we obtain
$0<\|u_{0}\|_0=\inf\limits_{u\in \mathcal{A}}\|u\|_0:=C_{0}$
and this proves our claim. From \eqref{eq2-6}, it follows that
\begin{equation}\label{eq2-7}
\aligned \|u\|_{W^{s,p(\cdot,\cdot)}(\Omega)}
&= \|u\|_{L^{p(\cdot)}(\Omega)}+[u]_{W^{s,p(\cdot,\cdot)}(\Omega)}\\
&\leq \|u\|_{L^{p(\cdot)}(\Omega)}+\|u\|_0\leq
(1+\frac{1}{C_0})\|u\|_0,
\endaligned
\end{equation}
which implies that $X$ is continuously embedded in
$W^{s,p(\cdot,\cdot)}(\Omega)$. From \eqref{eq2-7} and Theorem \ref{inj}, there exists
a constant $C>0$ such that
$$
\|u\|_{L^{r(\cdot)}(\Omega)}\leq C\|u\|_0.
$$
Thus, \eqref{eq2-5} has been proved. Finally, combining the fact that $X\hookrightarrow W^{s,p(\cdot,\cdot)}(\Omega)$ and $W^{s,p(\cdot,\cdot)}(\Omega)\hookrightarrow \hookrightarrow L^{r(\cdot)}(\Omega)$ (applying Theorem~\ref{inj} again) we obtain $X \hookrightarrow \hookrightarrow L^{r(\cdot)}(\Omega)$. The proof is complete.
%To prove the embedding given in \eqref{eq2-5} {\color{red}is}
%compact, let $\{u_{n}\}$ is a bounded sequence in $X$. So,
%\eqref{eq2-7} implies that
% $\{u_{n}\}$ be a bounded sequence in $W^{s,p(\cdot,\cdot)}(\Omega)$,
% and from Theorem \ref{inj} we infer that %there exists $u\in L^{r(\cdot)}(\Omega)$ such that  $u_{n}\to u$ strongly in $L^{r(\cdot)}(\Omega)$.  This completes the proof of Theorem \ref{th2-8}.
\end{proof}
%***********************************************************************************************************************************
\section{Eigenvalue problem}\label{EP.Exist}
%***********************************************************************************************************************************
Motivated by \cite{fan, Mih-Rad.MM2008}, in this section we are concerned with the following nonhomogeneous problem
\begin{equation}\label{val}
\begin{cases}
(-\Delta)_{p(x)}^su+\alpha |u|^{p(x)-2}u+\beta |u|^{q(x)-2}u=\lambda |u|^{r(x)-2}u& \quad \text{in} \quad \Omega, \\
u=0& \quad \text{in} \quad \Bbb R^N\setminus \Omega,
\end{cases}
\end{equation}
where $ p\in
C(\mathbb{R}^{N}\times \mathbb{R}^{N},\mathbb{R})$ satisfy $\eqref{P}$ (with $\Omega=\mathbb{R}^{N}$), $q,r\in
C(\overline{\Omega},\mathbb{R})$ satisfy \eqref{Q} and \eqref{R}; $\alpha,\beta$ are nonnegative real numbers; and $\lambda$ is a
real spectral parameter.

Note that when $\beta =0,$ we will regard $q(x)=p(x)$ on $\overline{\Omega}$ in all our statements appearing $q$.
\begin{definition}\rm
A pair $(u,\lambda)\in X\times\mathbb{R}$ is called a
solution of problem \eqref{val} if
\begin{align*}
\displaystyle \int_{\mathbb{R}^{N}\times
\mathbb{R}^{N}}&\frac{|u(x)-u(y)|^{p(x,y)-2}(u(x)-u(y))(v(x)-v(y))}{|x-y|^{N+sp(x,y)}}\diff
x\diff y +\alpha \displaystyle \int_{\Omega} |u|^{p(x)-2}uv\diff
x\\&+\beta \displaystyle \int_{\Omega} |u|^{q(x)-2}uv\diff x=\lambda
\displaystyle \int_{\Omega} |u|^{r(x)-2}uv\diff x,\quad \forall
v\in X.
\end{align*}
If $(u,\lambda)$ is a solution of problem \eqref{val} and
$u\in X\setminus \{0\}$, as usual, we call $\lambda$
and $u$ an \textit{eigenvalue} and an \textit{eigenfunction}
corresponding to $\lambda$ for problem~ \eqref{val}, respectively. A
solution $(u,\lambda)$ of \eqref{val} with $u\ne 0$ is also called an
\textit{eigenpair} of problem~\eqref{val}. 
\end{definition}
 We divide this section into three subsections. In the first part by employing the Ljusternik-Schnirelmann theory, we construct
  a sequence of eigenvalues of problem~\eqref{val}. %under the assumtions \eqref{P}, \eqref{Q} and \eqref{R}. 
  We also discuss about the positivity of the infimum of the set of eigenvalues of problem~\eqref{val} in this subsection. In the last two parts,  we deal
   with the existence and the nonexistence of eigenvalues of problem~ \eqref{val} under some additional assumptions.
 %%%%%%%%%%%%%%%%%%%%%%%%%%%%%%%%%%%%%%%%%%%%%%%%%%%%%%%%%%%%%%%%%%%%%%%%%%%%%%%%%%%%%%%%%%%%

 In order to investigate the eigenvalues of \eqref{val}, we consider the energy functional associated with problem~\eqref{val}. In particular we consider the
 functionals
  $I,I_0, J, J_0,\Phi_\lambda:\, X \rightarrow \mathbb{R}$ given by
 $$I(u)=\displaystyle
 \int_{\mathbb{R}^{N}\times
    \mathbb{R}^{N}}\frac{|u(x)-u(y)|^{p(x,y)}}{p(x,y)|x-y|^{N+sp(x,y)}}\diff x\diff y
 + \displaystyle \alpha\int_{\Omega} \frac{|u|^{p(x)}}{p(x)}\diff x+
 \displaystyle \beta\int_{\Omega} \frac{|u|^{q(x)}}{q(x)}\diff x,$$
 $$I_0(u)=\displaystyle
 \int_{\mathbb{R}^{N}\times
    \mathbb{R}^{N}}\frac{|u(x)-u(y)|^{p(x,y)}}{|x-y|^{N+sp(x,y)}}\diff x\diff y
 + \displaystyle \alpha\int_{\Omega} |u|^{p(x)}\diff x+ \displaystyle
 \beta\int_{\Omega} |u|^{q(x)}\diff x,$$
 $$J(u)=\displaystyle \int_{\Omega}
 \frac{|u|^{r(x)}}{r(x)}\diff x,\quad J_0(u)=\displaystyle \int_{\Omega}
 |u|^{r(x)}\diff x,$$
 and
 \begin{equation}\label{Phi_lambda}
 \Phi_\lambda(u)=I(u)-\lambda J(u).
 \end{equation}
Invoking Theorem~\ref{th2-8} and a standard argument, we can show that $I,I_0, J, J_0,\Phi_\lambda\in C^1(X,\mathbb{R})$ and a critical point of $\Phi_\lambda$ is a solution to problem~\eqref{val}. In the sequel, we will make use of the following values:
\begin{equation}\label{def.gamma0,gamma1}
\gamma_1=\displaystyle \inf_{u\in X\setminus\{0\}} \frac{I(u)}{J(u)}\ \ \text{and}\ \  \gamma_0=\displaystyle \inf_{u\in X\setminus\{0\}} \frac{I_0(u)}{J_0(u)}.
\end{equation}
 Clearly,
 \begin{equation}\label{relation.gamma0-gamma1}
\frac{\min\{p^-,q^-\}}{r^+}\gamma_1 \leq \gamma_0\leq \frac{\max\{p^+,q^+\}}{r^-}\gamma_1.
 \end{equation}
 In what follows, unless otherwise stated on $X$ we will make use the norm $\|\cdot\|_0$. By Theorem~\ref{th2-8},
 this norm is equivalent to $\|\cdot\|_X$ or $\|\cdot\|_1$ given by \eqref{equnorm} when $\alpha+\beta>0.$ 
\subsection{A sequence of eigenvalues } In this subsection we construct a sequence of eigenvalues for \eqref{val} via the Ljusternik-Schnirelmann theory. Denote
\begin{equation*}\label{def.Lambda}
\Lambda:=\{\lambda:\ \lambda\ \text{is an eigenvalue of } \eqref{val}\}.
\end{equation*}
For $t>0$, define
\begin{equation*}
N_t:=\{u\in X:\ I(u)=t\}.
\end{equation*}
Clearly, for each $u\in X\setminus\{0\}$, there exists a
unique $s_t=s_t(u)\in (0,\infty)$ such that $s_tu\in N_t.$
 Moreover, we have
 \begin{equation}\label{s_t}
 s_t\to 0\ \ \text{as} \ \ t\to 0^+\ \ \text{and}\ \ s_t\to+\infty\ \ \text{as} \ \ t\to+\infty.
 \end{equation}
 For each $n\in\mathbb{N},$ define
\begin{equation*}
\mathcal{K}_n:=\{K\subset X\setminus\{0\}:\ K\ \text{is compact }, -K=K,\ \text{and } \gamma (K)\geq n\},
\end{equation*}
where $\gamma (K)$ denote the Krasnoselski genus of $K$, and
\begin{equation*}
c_n(t):=\underset{K\subset
N_t}{\underset{K\in\mathcal{K}_n}{\sup}}\, \inf_{u\in K}\, J(u).
\end{equation*}
Clearly, $c_n(t)$ is well defined for all $n\in\mathbb{N}$. Moreover, we have
\begin{equation}\label{seq.cn}
c_1(t)\geq c_2(t)\geq \cdots\geq c_n(t)\geq c_{n+1}(t)\geq\cdots>0.
\end{equation}
Also, we have the following formular
\begin{equation}\label{c1}
c_1(t)=\sup_{u\in N_t}\, J(u).
\end{equation}
By the Lagrange multiplier rule, $u$ is a critical point of $J$ restricted to $N_t$ if and only if  $(u,\lambda)$ with
\begin{equation}\label{form.lambda(u)}
\lambda=\lambda(u):=\frac{I_0(u)}{J_0(u)}
\end{equation}
is a solution of \eqref{val} (see \cite[Sections 43.9 and 44.5]{Zeidler}). %In other words, if $u$ is a critical point of $J$ restricted on $N_t$ then $\lambda(u)$ given by \eqref{form.lambda(u)} is an eigenvalue of \eqref{val} (see \cite[Corollary 43.22]{Zeidler}). 
The next theorem is deduced from the Ljusternik-Schnirelmann theory (see \cite[Theorem 44.A]{Zeidler}).
\begin{theorem}\label{L-S}
    For each $t>0$, the following assertions hold:
    \begin{itemize}
        \item [(i)] for each $n\in\mathbb{N},$ $c_n(t)$ is a critical value of $J$ restricted on $N_t$;
        \item [(ii)] $c_n(t)\to 0^+$ as $n\to\infty.$
    \end{itemize}
\end{theorem}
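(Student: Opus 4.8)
The plan is to deduce Theorem~\ref{L-S} from the abstract Ljusternik--Schnirelmann theorem in \cite[Theorem 44.A]{Zeidler}, by verifying its hypotheses for the pair of functionals $(J, I)$ restricted to the manifold $N_t=\{u\in X:\ I(u)=t\}$. First I would check that $N_t$ is a $C^1$ submanifold of $X$: since $I\in C^1(X,\mathbb R)$ and, by Lemma~\ref{lemma_coercivity}(i) adapted to $X$, $I'(u)=0$ forces $\rho$-type quantities to vanish and hence $u=0$, while $0\notin N_t$ for $t>0$, the value $t$ is a regular value of $I$. The manifold $N_t$ is symmetric ($-N_t=N_t$, since both the Gagliardo term and the $L^{p(x)}$-, $L^{q(x)}$-terms in $I$ are even), bounded away from $0$, and in fact bounded and closed in $X$: $I(u)\ge \frac{1}{p^+}[u]_{s,p(\cdot,\cdot),\mathbb R^N}^{\,p^-}$ (using Lemma~\ref{lemma2-6}(ii)--(iii)) together with equivalence of norms from Theorem~\ref{th2-8} gives coercivity of $I$, so $N_t$ is bounded; closedness follows from continuity of $I$. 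Hence $N_t$ is a complete symmetric $C^1$-Finsler manifold, as required.

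Next I would verify the variational setup: $J\in C^1(X,\mathbb R)$ is even, and the relevant Palais--Smale condition holds for $J|_{N_t}$. Here the key input is the compact embedding $X\hookrightarrow\hookrightarrow L^{r(\cdot)}(\Omega)$ from Theorem~\ref{th2-8}, which makes $J$ and $J'$ completely continuous; combined with the fact that $I'$ (equivalently $\rho'$ on $X$) is of type $(S_+)$ — a standard property of the fractional $p(\cdot)$-Laplacian operator, provable from Lemma~\ref{lemma_coercivity} and the monotonicity of the map — one obtains that any Palais--Smale sequence for $J|_{N_t}$ has a convergent subsequence. Concretely: a $(PS)$ sequence $\{u_n\}\subset N_t$ is bounded (since $N_t$ is bounded), so up to a subsequence $u_n\rightharpoonup u$ in $X$ and $u_n\to u$ in $L^{r(\cdot)}(\Omega)$; the constrained critical point equation $J'(u_n)=\mu_n I'(u_n)+o(1)$ with $\mu_n$ the Lagrange multiplier (bounded, using $\langle I'(u_n),u_n\rangle\ge p^- I(u_n)=p^- t>0$ and $\langle J'(u_n),u_n\rangle$ bounded below by $r^- J(u_n)$, with $J(u_n)\ge c_n(t)>0$ on the relevant minimax sets) then yields $\limsup\langle I'(u_n),u_n-u\rangle\le 0$, whence $u_n\to u$ by $(S_+)$.

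Then the deformation/minimax machinery of \cite[Theorem 44.A]{Zeidler} applies verbatim to the family $\{c_n(t)\}_{n\in\mathbb N}$ defined via the genus: assertion (i), that each $c_n(t)$ is a critical value of $J|_{N_t}$, is exactly the conclusion of that theorem once the hypotheses above are in place; the positivity $c_n(t)>0$ recorded in \eqref{seq.cn} guarantees these are genuine (nonzero) critical values. For assertion (ii), $c_n(t)\to 0^+$, I would argue as follows: $\{c_n(t)\}$ is nonincreasing and positive, hence converges to some $c_\infty\ge 0$; if $c_\infty>0$, the set $K_\infty:=\{u\in N_t:\ J(u)\ge c_\infty\}$ would, by the completeness of the embedding $X\hookrightarrow L^{r(\cdot)}(\Omega)$ and the Palais--Smale condition, be compact, so it would have finite genus $m:=\gamma(K_\infty)<\infty$; but the definition of $c_{m+1}(t)$ as a sup over symmetric compact sets of genus $\ge m+1$ contained in $N_t$, together with the standard continuity/subadditivity properties of the genus, forces $c_{m+1}(t)<c_\infty$, a contradiction with $c_{m+1}(t)\ge c_\infty$. (Equivalently, one invokes directly the part of \cite[Theorem 44.A]{Zeidler} asserting $c_n\to 0$ when the manifold has infinite genus and $J$ is compact and positive.)

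The main obstacle is the verification of the Palais--Smale condition for the constrained functional, and within that, controlling the Lagrange multipliers $\mu_n$ away from degeneracy and establishing the $(S_+)$-property of $I'$ on $X$ in the variable-exponent fractional setting; everything else (regularity of $N_t$, evenness, boundedness, the genus bookkeeping in (ii)) is routine once Theorems~\ref{inj}, \ref{th2-8} and Lemma~\ref{lemma_coercivity} are invoked. I would therefore isolate the $(S_+)$-property and the multiplier bound as the two technical lemmas to prove carefully, and cite \cite[Sections 43.9, 44.5 and Theorem 44.A]{Zeidler} for the abstract deformation argument and the convergence $c_n(t)\to 0^+$.
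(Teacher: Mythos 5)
Your proposal is correct and takes essentially the same route as the paper, which offers no proof of Theorem~\ref{L-S} beyond the citation of \cite[Theorem 44.A]{Zeidler}: your verification of that theorem's hypotheses ($t$ a regular value of $I$, $N_t$ bounded, closed and symmetric, complete continuity of $J$ and $J'$ via Theorem~\ref{th2-8}, the $(S_+)$-property of $I'$, and the bound on the Lagrange multipliers) is precisely what that citation presupposes. The only slip is in your sketch of (ii): the superlevel set $K_\infty=\{u\in N_t:\ J(u)\geq c_\infty\}$ is \emph{not} compact (it is an open-in-$N_t$ neighborhood of any point with $J>c_\infty$ on an infinite-dimensional manifold); the set that is compact by the Palais--Smale condition and has finite genus is the set of \emph{critical points} of $J|_{N_t}$ at levels $\geq c_\infty$ --- but your fallback of invoking the $c_n\to 0$ conclusion of \cite[Theorem 44.A]{Zeidler} directly is exactly what the paper does.
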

Let $u_n\in N_t$ such that $c_n(t)=J(u_n)$, then by \eqref{form.lambda(u)}, $(u_n,\lambda_n)$ is an eigenpair of \eqref{val} with
\begin{equation*}
\lambda_n=\frac{I_0(u_n)}{J_0(u_n)}\geq \frac{\min\{p^-,q^-\}}{r^+}\frac{I(u_n)}{J(u_n)}=\frac{\min\{p^-,q^-\}}{r^+}\frac{t}{c_n(t)}.
\end{equation*}
Hence, the next corollary is a direct consequence of Theorem~\ref{L-S}.
\begin{corollary}
For each $t>0$, problem~\eqref{val} admits a sequence of eigenpairs $\{(u_n,\lambda_n)\}$ with $u_n\in N_t$ and $\lambda_n\to +\infty$ as $n\to \infty$.
\end{corollary}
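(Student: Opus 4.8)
The plan is to read the corollary off the Ljusternik-Schnirelmann data already assembled, namely the critical values $c_n(t)$ of $J|_{N_t}$, the Lagrange multiplier identity \eqref{form.lambda(u)}, and the two limit statements of Theorem~\ref{L-S}. Fix $t>0$. First I would apply Theorem~\ref{L-S}(i) to get, for every $n\in\mathbb{N}$, a critical point $u_n$ of $J$ restricted to $N_t$ with $J(u_n)=c_n(t)$. Since $u_n\in N_t$ forces $I(u_n)=t>0$ and $I$ vanishes only at $0$, we have $u_n\ne 0$, hence $J_0(u_n)>0$ and $c_n(t)=J(u_n)>0$ (the positivity already recorded in \eqref{seq.cn}).

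Next I would promote each $u_n$ to an eigenpair. By the Lagrange multiplier rule quoted just before Theorem~\ref{L-S}, a critical point $u$ of $J|_{N_t}$ gives a solution $(u,\lambda)$ of \eqref{val} with $\lambda=\lambda(u)=I_0(u)/J_0(u)$; applied to $u_n$ this yields $(u_n,\lambda_n)$ with $\lambda_n=I_0(u_n)/J_0(u_n)$, and as $u_n\ne 0$ it is an eigenpair in the sense of the definition above. The only point needing a word of care is that the multiplier rule is legitimate on $N_t$, i.e.\ that $I'(u)\ne 0$ for $u\in N_t$; this follows from the exact identity $\langle I'(u),u\rangle=I_0(u)\ge\min\{p^-,q^-\}\,I(u)=\min\{p^-,q^-\}\,t>0$, which shows $N_t$ is a $C^1$ Banach submanifold of $X$ near each of its points, so the rule applies.

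Finally I would estimate $\lambda_n$ from below and conclude. From the pointwise bounds $p(x,y),p(x)\ge p^-$, $q(x)\ge q^-$ and $r(x)\le r^+$ one has, for all $u\in X$,
\[
I_0(u)\ \ge\ \min\{p^-,q^-\}\,I(u),\qquad J_0(u)\ \le\ r^+\,J(u),
\]
which are the same elementary inequalities that underlie \eqref{relation.gamma0-gamma1}. Inserting $u=u_n$ together with $I(u_n)=t$ and $J(u_n)=c_n(t)$ gives
\[
\lambda_n=\frac{I_0(u_n)}{J_0(u_n)}\ \ge\ \frac{\min\{p^-,q^-\}}{r^+}\cdot\frac{t}{c_n(t)},
\]
and since $c_n(t)\to 0^+$ by Theorem~\ref{L-S}(ii), the right-hand side tends to $+\infty$, whence $\lambda_n\to+\infty$. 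Thus $\{(u_n,\lambda_n)\}_{n\in\mathbb{N}}\subset N_t\times\mathbb{R}$ is the asserted sequence of eigenpairs. Beyond Theorem~\ref{L-S} itself there is no serious obstacle here: what remains is the bookkeeping above, the only delicate step being the submanifold/multiplier check of the second paragraph. (If pairwise distinct eigenvalues are desired, note that $\lambda_n\to+\infty$ already forces $\{\lambda_n\}$ to take infinitely many values, so a strictly increasing subsequence of eigenvalues can be extracted.)
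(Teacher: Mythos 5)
Your argument is correct and is essentially the paper's own: take $u_n\in N_t$ with $J(u_n)=c_n(t)$ from Theorem~\ref{L-S}(i), pass to the eigenpair via the Lagrange multiplier identity \eqref{form.lambda(u)}, and use the bound $\lambda_n\ge \frac{\min\{p^-,q^-\}}{r^+}\frac{t}{c_n(t)}$ together with $c_n(t)\to 0^+$ from Theorem~\ref{L-S}(ii). Your extra check that $\langle I'(u),u\rangle\ge \min\{p^-,q^-\}\,t>0$ on $N_t$, justifying the multiplier rule, is a welcome detail the paper leaves implicit.
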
  
To have more information about the set of eigenpairs associated with $c_n(t)$ resticted to $N_t$, define for $t>0$ and $n\in\mathbb{N},$
\begin{equation*}
K_n(t):=\{u\in N_t:\ u\ \text{ is a critical point of}\ J \text{ restrited to }\ N_t\ \text{ and } J (u)=c_n(t)\}
\end{equation*}
and \begin{equation*}
\Lambda_n(t):=\{\lambda(u):\ u\in K_n(t)\}.
\end{equation*}
By \eqref{form.lambda(u)} again, we have
\begin{equation}\label{lambda(u)}
\frac{\min\{p^-,q^-\}}{r^+}\frac{t}{c_n(t)}\leq\lambda(u)=\frac{I_0(u)}{I_0(u)}\leq \frac{\max\{p^+,q^+\}}{r^-}\frac{t}{c_n(t)},\quad \forall u\in K_n(t).
\end{equation}
In the following, for brevity, an inequality $\Lambda_n(t)\leq (\geq)\ C$ means that $\lambda \leq (\geq)\ C$ for every $\lambda\in \Lambda_n(t)$
 and a limit $\Lambda_n(t)\to a$ as $n\to\infty$ means the limit occurs unfiromly with respect to $\lambda\in \Lambda_n(t)$. By the definitions of
 $\Lambda_n(t)$ and \eqref{lambda(u)}, we easily obtain the following estimates: for each $t>0$ and $n\in \mathbb{N}$,
\begin{equation}\label{Lambda_n-mu_n}
\frac{\min\{p^-,q^-\}}{r^+}\frac{t}{c_n(t)}\leq \Lambda_n(t)\leq \frac{\max\{p^+,q^+\}}{r^-}\frac{t}{c_n(t)}.
\end{equation}
From \eqref{Lambda_n-mu_n} and Theorem~\ref{L-S}, we have the following.
\begin{theorem} \label{propertiesLambda_n}
    For each $t>0$ and for each $n\in\mathbb{N},$ the sets $K_n(t)$ and $\Lambda_n(t)$ are nonempty, $\Lambda_n(t)\subset\Lambda$, and for any
    $u\in K_n(t),$ $(u,\lambda(u))$ is a solution of \eqref{val}. Moreover, for each $t>0,$ $\Lambda_n(t)\to +\infty$ as $n\to\infty.$
\end{theorem}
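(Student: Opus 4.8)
The plan is to assemble the statement from the machinery already in place, namely Theorem~\ref{L-S}, the Lagrange multiplier characterization \eqref{form.lambda(u)}, the estimate \eqref{Lambda_n-mu_n}, and the asymptotics in \eqref{seq.cn}--\eqref{c1}. Fix $t>0$ and $n\in\mathbb{N}$. First I would argue that $K_n(t)\neq\emptyset$: by Theorem~\ref{L-S}(i), $c_n(t)$ is a critical value of $J$ restricted to $N_t$, so there exists at least one $u\in N_t$ which is a critical point of $J|_{N_t}$ with $J(u)=c_n(t)$; by definition this $u$ lies in $K_n(t)$. Consequently $\Lambda_n(t)=\{\lambda(u):u\in K_n(t)\}\neq\emptyset$ as well, with $\lambda(u)$ given by \eqref{form.lambda(u)}.

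Next I would show $\Lambda_n(t)\subset\Lambda$ and that $(u,\lambda(u))$ solves \eqref{val} for each $u\in K_n(t)$. This is immediate from the Lagrange multiplier rule recalled before Theorem~\ref{L-S}: $u$ is a critical point of $J$ restricted to $N_t=\{I=t\}$ if and only if $(u,\lambda)$ with $\lambda=\lambda(u)=I_0(u)/J_0(u)$ is a solution of \eqref{val}. Since any $u\in K_n(t)$ lies in $N_t\subset X\setminus\{0\}$ (note $I(u)=t>0$ forces $u\neq 0$), the pair $(u,\lambda(u))$ is an eigenpair, so $\lambda(u)\in\Lambda$; as $u\in K_n(t)$ was arbitrary, $\Lambda_n(t)\subset\Lambda$.

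For the final assertion, that $\Lambda_n(t)\to+\infty$ as $n\to\infty$ uniformly in $\lambda\in\Lambda_n(t)$, I would invoke the lower bound in \eqref{Lambda_n-mu_n}, namely $\Lambda_n(t)\geq \frac{\min\{p^-,q^-\}}{r^+}\,\frac{t}{c_n(t)}$, together with Theorem~\ref{L-S}(ii), which gives $c_n(t)\to 0^+$ as $n\to\infty$. Since $t$ and the exponent constants are fixed and positive, $\frac{t}{c_n(t)}\to+\infty$, whence every $\lambda\in\Lambda_n(t)$ is bounded below by a quantity tending to $+\infty$; this is exactly the claimed uniform divergence. I do not anticipate a genuine obstacle here since all the work has been front-loaded into Theorem~\ref{L-S} and the estimate \eqref{Lambda_n-mu_n}; the only point requiring a word of care is ensuring $\lambda(u)$ is well defined on $K_n(t)$, i.e.\ that $J_0(u)\neq 0$, which follows because $J(u)=c_n(t)>0$ by \eqref{seq.cn} forces $u\not\equiv 0$ on $\Omega$ and hence $J_0(u)>0$.
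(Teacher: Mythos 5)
Your proposal is correct and follows exactly the route the paper intends: the paper states this theorem as an immediate consequence of Theorem~\ref{L-S}, the Lagrange multiplier characterization \eqref{form.lambda(u)}, and the estimate \eqref{Lambda_n-mu_n}, which is precisely the chain of deductions you spell out. Your added remarks on well-definedness of $\lambda(u)$ (i.e., $J_0(u)>0$ on $K_n(t)$) are a harmless and reasonable elaboration of details the paper leaves implicit.
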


\vspace{0.3cm}

\noindent\textbf{The infimum of eigenvalues}

Denote
\begin{equation}\label{lambda_*}
\lambda_*:=\inf\,\Lambda.
\end{equation}
By Theorem~\ref{propertiesLambda_n}, $\lambda_*$ is well defined and it is clear that $\lambda_*\in [0,\infty)$. It is worth pointing out that when $p$ $q$, and $r$ are constant functions and $p=q=r$, we have
that $\lambda_*=\gamma_0>0$ and is the first eigenvalue of
\eqref{val}. In the variable exponent case, it is not true in
general.    First, we have the relation of positivity of
$\gamma_0,\gamma_1$ and $\lambda_*$ as follows.
\begin{lemma}\label{relations.lambda1}
   It holds that
    $$\gamma_1>0 \iff \gamma_0>0 \iff\lambda_*>0.$$
\end{lemma}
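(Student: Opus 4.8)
The plan is to prove the two equivalences by a mix of an easy implication and two more substantive ones, exploiting the relation \eqref{relation.gamma0-gamma1} between $\gamma_0$ and $\gamma_1$ together with the characterization of eigenvalues coming from the Ljusternik--Schnirelmann construction. The equivalence $\gamma_1>0\iff\gamma_0>0$ is immediate: since $p^-,q^-,r^-,p^+,q^+,r^+$ are all finite and positive, the two-sided estimate \eqref{relation.gamma0-gamma1}, namely $\tfrac{\min\{p^-,q^-\}}{r^+}\gamma_1\le\gamma_0\le\tfrac{\max\{p^+,q^+\}}{r^-}\gamma_1$, shows that one of $\gamma_0,\gamma_1$ is strictly positive precisely when the other is. So the whole content of the lemma lies in linking $\gamma_0>0$ (equivalently $\gamma_1>0$) with $\lambda_*>0$.

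For the direction $\lambda_*>0\Rightarrow\gamma_0>0$, I would argue contrapositively: assume $\gamma_0=0$ and produce eigenvalues of \eqref{val} that are arbitrarily small, forcing $\lambda_*=0$. If $\gamma_0=0$ then there is a minimizing sequence $\{v_k\}\subset X\setminus\{0\}$ with $I_0(v_k)/J_0(v_k)\to 0$. For each fixed level $t>0$, rescale: by the discussion after \eqref{def.Lambda} there is a unique $s_t(v_k)>0$ with $s_t(v_k)v_k\in N_t$, and I would like to feed these into the $c_1(t)$-value. More cleanly, I would use \eqref{c1}, $c_1(t)=\sup_{u\in N_t}J(u)$, together with \eqref{Lambda_n-mu_n} for $n=1$, which gives an eigenvalue $\lambda\in\Lambda_1(t)\subset\Lambda$ with $\lambda\le\tfrac{\max\{p^+,q^+\}}{r^-}\,\tfrac{t}{c_1(t)}$. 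Thus $\lambda_*\le\tfrac{\max\{p^+,q^+\}}{r^-}\,\tfrac{t}{c_1(t)}$ for every $t>0$, so it suffices to show $\tfrac{t}{c_1(t)}$ can be made arbitrarily small, i.e. $c_1(t)/t$ can be made large, which (after normalizing $I$-values to $I_0$-values using Lemma~\ref{fmod}-type modular estimates, or directly via $I_0$) amounts to saying $\sup_{u\in X\setminus\{0\}}J_0(u)/I_0(u)=1/\gamma_0=+\infty$. Hence $\lambda_*=0$.

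For the converse $\gamma_0>0\Rightarrow\lambda_*>0$, I would show every eigenvalue is bounded below by a positive constant. Let $(u,\lambda)$ be an eigenpair; testing the weak formulation in the Definition with $v=u$ yields exactly $I_0(u)=\lambda J_0(u)$, so $\lambda=I_0(u)/J_0(u)\ge\gamma_0>0$ by the definition \eqref{def.gamma0,gamma1} of $\gamma_0$. Taking the infimum over $\Lambda$ gives $\lambda_*\ge\gamma_0>0$. This direction is in fact the short one. Combining: $\gamma_1>0\iff\gamma_0>0$, and $\gamma_0>0\Rightarrow\lambda_*\ge\gamma_0>0$, while $\gamma_0=0\Rightarrow\lambda_*=0$; hence all three conditions are equivalent.

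The main obstacle, and the step I would spend the most care on, is the quantitative argument in the $\lambda_*>0\Rightarrow\gamma_0>0$ direction: translating "$\gamma_0=0$" into "$c_1(t)/t$ unbounded over $t$" and then into "some genuine eigenvalue from $\Lambda_1(t)$ is arbitrarily small." One must be careful that $c_1(t)$ is a supremum that may or may not be attained, that the passage between $I$ and $I_0$ (and $J$ and $J_0$) introduces only the harmless constants $p^\pm,q^\pm,r^\pm$, and that $\Lambda_1(t)$ is nonempty and contained in $\Lambda$ — but this last point is already guaranteed by Theorem~\ref{propertiesLambda_n}. Everything else is bookkeeping with the modular inequalities already recorded in Lemma~\ref{fmod} and Lemma~\ref{lemma2-6}.
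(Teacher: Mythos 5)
Your proposal is correct and follows essentially the same route as the paper: reduce to $\gamma_0>0\iff\lambda_*>0$ via \eqref{relation.gamma0-gamma1}, get $\lambda_*\ge\gamma_0$ by testing an eigenpair with $v=u$, and prove the converse contrapositively by taking $u_\eps$ with $I_0(u_\eps)/J_0(u_\eps)<\eps$, setting $t=I(u_\eps)$ so that $t/c_1(t)\le I(u_\eps)/J(u_\eps)$, and extracting an arbitrarily small eigenvalue from $\Lambda_1(t)$ via \eqref{Lambda_n-mu_n}. The only difference is cosmetic: the paper works directly with the witness $u_\eps$ rather than phrasing the step as $\sup_u J_0(u)/I_0(u)=1/\gamma_0=+\infty$.
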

\begin{proof}
   By \eqref{relation.gamma0-gamma1}, it suffices to prove that
   \begin{equation}\label{gamma_0-lambda_*}
  \gamma_0>0 \iff\lambda_*>0.
   \end{equation}
     It is clear that  $\gamma_0\leq \lambda_*$ hence; $(\Rightarrow)$ is obvious. Now suppose $\gamma_0=0$.
    Then for any given $\epsilon>0,$ there exists $u_\epsilon\in X\setminus\{0\}$ such that
    \begin{equation}\label{eps1}
    \frac{I_0(u_\eps)}{J_0(u_\eps)}<\eps.
    \end{equation}
    Let $I(u_\eps)=t.$ Then, from \eqref{seq.cn}, \eqref{c1} and \eqref{eps1} we obtain
    \begin{equation}\label{eps2}
    \frac{t}{c_1(t)}\leq \frac{I(u_\eps)}{J(u_\eps)}\leq \frac{\frac{1}{\min\{p^-,q^-\}}I_0(u_\eps)}{\frac{1}{r^+}J_0(u_\eps)}<\frac{r^+}{\min\{p^-,q^-\}}\eps.
    \end{equation}
    Let $u_t\in N_t$ such that $J(u_t)=c_1(t).$ Then, we deduce from \eqref{eps2} that
        \begin{equation*}
    \lambda(u_t)=\frac{I_0(u_t)}{J_0(u_t)}\leq \frac{\max\{p^+,q^+\}I(u_t)}{r^-J(u_t)}=\frac{\max\{p^+,q^+\}}{r^-}\frac{t}{c_1(t)}<\frac{r^+\max\{p^+,q^+\}}{r^-\min\{p^-,q^-\}}\eps.
    \end{equation*}
    Combining this with \eqref{lambda_*} gives
     \begin{equation*}
    0\leq \lambda_*<\frac{r^+\max\{p^+,q^+\}}{r^-\min\{p^-,q^-\}}\eps.
    \end{equation*}Since $\eps>0$ was chosen arbitrarily, we arrive at $\lambda_*=0.$ This infers that $(\Leftarrow)$ also holds. That is, \eqref{gamma_0-lambda_*} holds and the proof is complete.
\end{proof}
In the next two lemmas, we provide sufficient conditions to get $\lambda_*=0.$ We will make use of the following conditions. In these conditions,
 by $h^+(V)$ (resp. $h^-(V)$) we mean the supremum (resp. infimum) of the function $h$ over the set $V$.
\begin{itemize}
    \item [(A1)] There exist an open subset $U$ of $\Omega$ such that
    $$r^+(U)<\min\left\{p^-(U\times \mathbb{R}^N),q^-(U)\right\}.$$
    \item [(A2)] There exist an open subset $\widetilde{U}$ of $\Omega$ such that
    $$r^-(\widetilde{U})>\max\left\{p^+(\widetilde{U}\times \mathbb{R}^N),q^+(\widetilde{U})\right\}.$$
    \end{itemize}
For each $t>0$, define
\begin{equation*}\label{mu_1(t)}
\mu_1(t):=\frac{t}{c_1(t)}
\end{equation*}
and
\begin{equation}\label{lambda_*(t)}
\lambda^*(t):=\inf\{\lambda(u):\, u\ \text{ is a critical point of}\ J \text{ restrited to }\ N_t\}.
\end{equation}
\begin{lemma}\label{Sufficitent.Condition1}
    Let $(\textup{A}1)$ hold. Then, $\mu_1(t)\to 0^+$ and $\lambda^*(t)\to 0^+$ as $t\to 0^+$. Consequently, $\lambda_*=0.$
\end{lemma}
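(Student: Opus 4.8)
The plan is to exploit condition (A1) by constructing, for each small $t>0$, an explicit test function supported in the open set $U$ on which $r$ is strictly below both $p$ and $q$. Fix a nonzero $\varphi\in C_c^\infty(U)$ and for a scaling parameter $\sigma>0$ consider $u_\sigma:=\sigma\varphi$, extended by zero outside $\Omega$ so that $u_\sigma\in X$. The key is to track the competition of powers: since $\supp\varphi\subset U$, every exponent appearing in $I_0(u_\sigma)$ — namely $p(x,y)$ for $x,y$ in the relevant region (using the extended exponent on $U\times\mathbb{R}^N$) and $q(x)$ for $x\in U$ — is bounded below by $p^-(U\times\mathbb{R}^N)$ and $q^-(U)$ respectively, while $r(x)$ on $\supp\varphi$ is bounded above by $r^+(U)$. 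Writing $a:=\min\{p^-(U\times\mathbb{R}^N),q^-(U)\}$ and $b:=r^+(U)$, condition (A1) gives $b<a$. For $\sigma\to 0^+$ one obtains estimates of the form $I_0(u_\sigma)\le C_1\,\sigma^{a}$ and $J_0(u_\sigma)\ge C_2\,\sigma^{b}$ (here the lower bound on $J_0$ uses that $|\sigma\varphi|^{r(x)}\ge \sigma^{r^+(U)}|\varphi|^{r(x)}$ on the set where $|\varphi|\le 1$, which holds for $\sigma$ small after possibly normalizing $\varphi$). Hence
\begin{equation*}
\frac{I_0(u_\sigma)}{J_0(u_\sigma)}\le \frac{C_1}{C_2}\,\sigma^{a-b}\to 0^+ \quad\text{as } \sigma\to 0^+,
\end{equation*}
and similarly $\frac{I(u_\sigma)}{J(u_\sigma)}\to 0^+$ since $I$ and $J$ differ from $I_0$ and $J_0$ only by exponent-dependent constants bounded between $1/r^+$ and $1/p^-$ etc.

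Next I would convert this into the claimed limits. Given $t>0$ small, use \eqref{s_t}: there is a unique $s_t(u_\sigma)>0$ with $s_t(u_\sigma)u_\sigma\in N_t$, and as $t\to 0^+$ the scaling $s_t(u_\sigma)\to 0$ so that the rescaled function still lies in the regime where the above power estimates apply; alternatively, and more cleanly, note that $J$ restricted to the ray $\{\sigma\varphi:\sigma>0\}$ is homogeneous in a way that lets us bound $c_1(t)=\sup_{N_t}J$ from below by $J$ evaluated at the point of that ray lying in $N_t$. Concretely, choose $\sigma=\sigma(t)$ so that $I(\sigma\varphi)=t$; then $\sigma(t)\to 0^+$ as $t\to 0^+$, and
\begin{equation*}
\mu_1(t)=\frac{t}{c_1(t)}\le \frac{t}{J(\sigma(t)\varphi)}=\frac{I(\sigma(t)\varphi)}{J(\sigma(t)\varphi)}\le \frac{r^+}{p^-}\,\frac{I_0(\sigma(t)\varphi)}{J_0(\sigma(t)\varphi)}\le C\,\sigma(t)^{a-b}\to 0^+.
\end{equation*}
For $\lambda^*(t)$, recall from \eqref{lambda_*(t)} that it is an infimum over critical points $u$ of $J$ restricted to $N_t$; by \eqref{Lambda_n-mu_n} with $n=1$ (or directly from \eqref{lambda(u)}), every such $u$ associated with the value $c_1(t)$ satisfies $\lambda(u)\le \frac{\max\{p^+,q^+\}}{r^-}\mu_1(t)$, whence $0\le\lambda^*(t)\le \frac{\max\{p^+,q^+\}}{r^-}\mu_1(t)\to 0^+$.

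Finally, the conclusion $\lambda_*=0$: by Theorem~\ref{propertiesLambda_n} (with $n=1$), $K_1(t)$ is nonempty and every $\lambda(u)$ with $u\in K_1(t)$ belongs to $\Lambda$; in particular $\lambda^*(t)$ is the infimum of a nonempty subset of $\Lambda$, so $0\le\lambda_*\le \lambda^*(t)$ for every $t>0$. Letting $t\to 0^+$ and using $\lambda^*(t)\to 0^+$ gives $\lambda_*=0$. The main obstacle I anticipate is the bookkeeping of the power estimates on $I_0(u_\sigma)$: one must verify that the Gagliardo double integral over $\mathbb{R}^N\times\mathbb{R}^N$ — not just the diagonal part over $U\times U$ — is controlled by $\sigma^{a}$ with $a=p^-(U\times\mathbb{R}^N)$, which is exactly why the hypothesis (A1) is phrased with $p^-(U\times\mathbb{R}^N)$ rather than $p^-(U\times U)$; splitting the integral into the region where both points lie in a neighborhood of $\supp\varphi$ and the far region (where $|u_\sigma(x)-u_\sigma(y)|=|u_\sigma(x)|$ is again $\le\sigma\|\varphi\|_\infty$ and the kernel is integrable) handles this, but it requires care to get the uniform constant and the correct exponent. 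The rest is routine once this estimate is in hand.
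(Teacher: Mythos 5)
Your proposal is correct and follows essentially the same route as the paper: a fixed test function $\varphi$ supported in $U$, scaled so that $I(s_t\varphi)=t$, with the exponent gap $\delta=\min\{p^-(U\times\mathbb{R}^N),q^-(U)\}-r^+(U)>0$ forcing $\mu_1(t)\le C s_t^{\delta}\to 0$, and then $\lambda^*(t)\le \frac{\max\{p^+,q^+\}}{r^-}\mu_1(t)$ via \eqref{lambda(u)}. The "far region" issue you flag is handled in the paper exactly as you suggest, by symmetry reducing the double integral to $2\int_{\mathbb{R}^N}\int_U$, which is why (A1) is stated with $p^-(U\times\mathbb{R}^N)$.
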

\begin{proof}
    Let $B$ be a ball in $\mathbb{R}^N$ such that $\overline{B}\subset U$. Let $\varphi\in C_c^\infty(\Om)$ be such that $\varphi\equiv 1$
     on $B$ and $\varphi\equiv 0$ on $\mathbb{R}^N\setminus U.$ By \eqref{s_t} and the strictly increasing monotonicity of $t\mapsto I(tu)$ on $(0,+\infty)$ for each $t>0$ small enough, there exists a unique $s_t\in (0,1)$ such that
     $s_t\varphi\in N_t.$ Set $\delta:= \min\left\{p^-(U\times \mathbb{R}^N),q^-(U)\right\}-r^+(U)>0.$ 
     Let $t\in (0,+\infty)$ be arbitrary and fixed. We have
    \begin{equation}\label{mu_1}
   \mu_1(t)\leq \frac{\int_{\mathbb{R}^{N} \times
            \mathbb{R}^{N}}\frac{s_t^{p(x,y)}}{p(x,y)}\frac{|\varphi(x)-\varphi(y)|^{p(x,y)}}{|x-y|^{N+sp(x,y)}}\diff x\diff y+\alpha\int_{\Omega}\frac{s_t^{p(x)}}{p(x)}|\varphi|^{(x)}\diff x+\beta\int_{\Omega}\frac{s_t^{q(x)}}{q(x)}|\varphi|^{q(x)}\diff x}
            {\int_{\Omega} \frac{s_t^{r(x)}}{r(x)}||\varphi(x)|^{r(x)}\diff
            x}.
    \end{equation}
    We estimate each integral in the right-hand side of \eqref{mu_1} as follows. We have
    \begin{align*}
    \int_{\mathbb{R}^{N} \times
        \mathbb{R}^{N}}\frac{s_t^{p(x,y)}}{p(x,y)}\frac{|\varphi(x)-\varphi(y)|^{p(x,y)}}{|x-y|^{N+sp(x,y)}}\diff x\diff y&
        \leq 2\int_{\mathbb{R}^N}\int_U s_t^{p(x,y)} \frac{|\varphi(x)-\varphi(y)|^{p(x,y)}}{p(x,y)|x-y|^{N+sp(x,y)}}\diff x\diff y\notag\\
    & \leq 2s_t^{r^+(U)+\delta}\int_{\mathbb{R}^N}\int_U \frac{|\varphi(x)-\varphi(y)|^{p(x,y)}}{p(x,y)|x-y|^{N+sp(x,y)}}\diff x\diff
    y.
    \end{align*}
    For the second and the third integrals, we estimate
    \begin{equation*}
    \int_{\Omega}\frac{s_t^{p(x)}}{p(x)}|\varphi|^{p(x)}\diff x= \int_{U}\frac{s_t^{p^-}|\varphi|^{p(x)}}{p(x)}\diff x\leq s_t^{r^+(U)+\delta}\int_{U}\frac{|\varphi|^{p(x)}}{p(x)}\diff
    x,
    \end{equation*}
    \begin{equation*}
   \int_{\Omega}\frac{s_t^{q(x)}}{q(x)}|\varphi|^{q(x)}\diff x\leq \int_{U}\frac{s_t^{q^-}|\varphi|^{q(x)}}{q(x)}\diff x\leq s_t^{r^+(U)+\delta}\int_{U}\frac{|\varphi|^{q(x)}}{q(x)}\diff
   x.
    \end{equation*}
    Finally, we have
    \begin{equation*}
    \int_{\Omega} \frac{s_t^{r(x)}}{r(x)}|\varphi(x)|^{r(x)}\diff x\geq
    s_t^{r^+(U)}\int_{\Omega} \frac{|\varphi(x)|^{r(x)}}{r(x)}\diff x.
    \end{equation*}
    Utilizing the last four estimates, we deduce from \eqref{mu_1} that
    \begin{equation*}
    \mu_1(t)\leq \frac{2I(\varphi)}{J(\varphi)} s_t^\delta.
    \end{equation*}
    On the other hand, by \eqref{lambda(u)}%\eqref{Lambda_n-mu_n} 
    and \eqref{lambda_*(t)} we have
    \begin{equation*}
    0\leq \lambda^*(t)\leq \frac{\max\{p^+,q^+\}}{r^-}\mu_1(t).
    \end{equation*}
   Combining the last two estimates and \eqref{s_t}, we conclude the lemma. The proof is complete.
    \end{proof}

    \begin{lemma}\label{Sufficitent.Condition2}
        Let $(\textup{A}2)$ hold. Then, $\mu_1(t)\to 0^+$ and $\lambda^*(t)\to 0^+$ as $t\to +\infty$. Consequently, $\lambda_*=0.$
    \end{lemma}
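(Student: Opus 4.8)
The plan is to run the argument of Lemma~\ref{Sufficitent.Condition1} in reverse, now exploiting that the normalizing scale $s_t$ blows up rather than collapses. I would pick a ball $B$ with $\overline B\subset\widetilde U$ and a cutoff $\varphi\in C_c^\infty(\Omega)$ with $\varphi\equiv 1$ on $B$ and $\supp\varphi\subset\widetilde U$. By \eqref{s_t} and the strict monotonicity of $s\mapsto I(s\varphi)$ on $(0,\infty)$, for every sufficiently large $t$ there is a unique $s_t>1$ with $s_t\varphi\in N_t$, and $s_t\to+\infty$ as $t\to+\infty$. Set $\delta:=r^-(\widetilde U)-\max\{p^+(\widetilde U\times\mathbb R^N),q^+(\widetilde U)\}>0$.

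Next I would bound $\mu_1(t)=t/c_1(t)\le I(s_t\varphi)/J(s_t\varphi)$ (using $s_t\varphi\in N_t$ and \eqref{c1}) and estimate each of the three integrals in the numerator and the single integral in the denominator, exactly as in \eqref{mu_1}. Because $s_t>1$, the inequalities for powers of $s_t$ flip compared with Lemma~\ref{Sufficitent.Condition1}: on $\supp\varphi\subset\widetilde U$ one has $p(x,y)\le p^+(\widetilde U\times\mathbb R^N)\le r^-(\widetilde U)-\delta$, $p(x,x)\le p^+(\widetilde U\times\mathbb R^N)\le r^-(\widetilde U)-\delta$, and $q(x)\le q^+(\widetilde U)\le r^-(\widetilde U)-\delta$, so each factor $s_t^{p(x,y)}$, $s_t^{p(x)}$, $s_t^{q(x)}$ is dominated by $s_t^{r^-(\widetilde U)-\delta}$; in the denominator, instead, $s_t^{r(x)}\ge s_t^{r^-(\widetilde U)}$ on $\widetilde U$. (As in Lemma~\ref{Sufficitent.Condition1}, the Gagliardo integrand is supported where $x\in\widetilde U$ or $y\in\widetilde U$, so by the symmetry of $p$ it is controlled by $2\int_{\mathbb R^N}\int_{\widetilde U}$, on which $p$ is indeed bounded by $p^+(\widetilde U\times\mathbb R^N)$.) Collecting these four estimates yields $\mu_1(t)\le\frac{2I(\varphi)}{J(\varphi)}\,s_t^{-\delta}$, and then $0\le\lambda^*(t)\le\frac{\max\{p^+,q^+\}}{r^-}\mu_1(t)$ by \eqref{lambda(u)} and \eqref{lambda_*(t)}.

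Letting $t\to+\infty$ and using $s_t\to+\infty$ gives $\mu_1(t)\to 0^+$ and $\lambda^*(t)\to 0^+$. For the final claim, Theorem~\ref{L-S}(i) guarantees that $J|_{N_t}$ possesses critical points, so $\lambda^*(t)$ is the infimum of a nonempty subset of $\Lambda$; hence $0\le\lambda_*\le\lambda^*(t)$ for every $t>0$, and passing to the limit forces $\lambda_*=0$. I do not expect a genuine obstacle here: the proof is essentially the mirror image of Lemma~\ref{Sufficitent.Condition1}, and the only delicate bookkeeping is keeping track of the reversed direction of the exponent inequalities now that $s_t>1$, together with the observation (already encoded in (A2)) that the nonlocal term demands comparison of $r$ on $\widetilde U$ with $p^+$ over $\widetilde U\times\mathbb R^N$ rather than over $\widetilde U\times\widetilde U$.
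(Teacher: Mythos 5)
Your argument is correct and is exactly the proof the paper intends: the authors omit the details, saying only to repeat Lemma~\ref{Sufficitent.Condition1} with $\delta:=r^-(\widetilde U)-\max\{p^+(\widetilde U\times\mathbb{R}^N),q^+(\widetilde U)\}$ and $t$ large enough that $s_t\in(1,+\infty)$, which is precisely the mirror-image computation you carry out. Your handling of the reversed power inequalities for $s_t>1$, the reduction of the Gagliardo term to $2\int_{\mathbb{R}^N}\int_{\widetilde U}$ where $p\le p^+(\widetilde U\times\mathbb{R}^N)$, and the passage from $\mu_1(t)$ to $\lambda^*(t)$ to $\lambda_*$ are all as in the model proof.
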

    The proof of Lemma~\ref{Sufficitent.Condition2} is similar to that of Lemma~\ref{Sufficitent.Condition1} for which we put $\delta:=r^-(\widetilde{U})-\max\left\{p^+(\widetilde{U}\times \mathbb{R}^N),q^+(\widetilde{U})\right\}$ and take $t\in (0,+\infty)$ so large that $s_t\in (1,+\infty).$ We leave the details to the reader.
    
    Set
    \begin{equation}\label{mu*}
   \mu_\ast:=\inf \{\mu_1(t):\ t\in (0,\infty)\}\ \ \text{and}\ \ \mu^\ast:=\sup \{\mu_1(t):\ t\in (0,\infty)\}.
    \end{equation}
    By Lemmas~\ref{Sufficitent.Condition1} and \ref{Sufficitent.Condition2}, if either $(\textup{A}1)$ or $(\textup{A}2)$ holds,
    then $\mu_\ast=0.$ Clearly, we always have $\mu^\ast>0.$ Moreover, if $(\textup{A}1)$ and $(\textup{A}2)$ hold, then $\mu^\ast<\infty$.
 
%%%%%%%%%%%%%%%%%%%%%%%%%%%%%%%%%%%%%%%%%%%%%%%%%%%%%%%%%%%%%%%%%%%%%%%%%%%%
\subsection{Existence results with the growth of type I}

In this subsection, we provide a precise range of small eigenvalues for problem \eqref{val}.  Using the positive constant $\mu^*$ given by \eqref{mu*}, we have the following existence result.
\begin{theorem}\label{RangeEigenvalues}
    Let $(\textup{A}1)$ hold and define $\Phi_\lambda$ as in \eqref{Phi_lambda}. Then, for any given $\lambda\in (0,\mu^\ast)$, $\Phi_\lambda$ has a nonnegative local minimizer $u_\lambda$ such that $\Phi_\lambda(u_\lambda)<0$. Consequently, for any given $\lambda\in (0,\mu^\ast)$, problem~\eqref{val} has a nontrivial nonnegative solution $u_\lambda$ with $\Phi_\lambda(u_\lambda)<0$.
\end{theorem}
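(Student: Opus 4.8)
The plan is to realise $u_\lambda$ as a minimizer of $\Phi_\lambda=I-\lambda J$ over a sublevel set of $I$ on whose boundary $\Phi_\lambda$ takes only positive values, exploiting the definition $\mu^\ast=\sup_{t>0}\mu_1(t)$ with $\mu_1(t)=t/c_1(t)$, and to exhibit a negative value of $\Phi_\lambda$ inside that sublevel set by recycling the test function from the proof of Lemma~\ref{Sufficitent.Condition1}. Concretely, I would first fix $\lambda\in(0,\mu^\ast)$ and pick $t_0>0$ with $\mu_1(t_0)=t_0/c_1(t_0)>\lambda$; in particular $c_1(t_0)\in(0,\infty)$. By \eqref{c1}, every $u\in N_{t_0}$ satisfies $J(u)\le c_1(t_0)<t_0/\lambda$, hence
\[
\Phi_\lambda(u)=I(u)-\lambda J(u)=t_0-\lambda J(u)>0\qquad\text{whenever }I(u)=t_0 .
\]
Thus $\Phi_\lambda>0$ on $\{u\in X:\ I(u)=t_0\}$, which is the topological boundary of $D:=\{u\in X:\ I(u)\le t_0\}$.

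Next I would minimize $\Phi_\lambda$ over $D$. Since for fixed $(x,y)$ the maps $(a,b)\mapsto|a-b|^{p(x,y)}$ and $t\mapsto|t|^{p(x)},\,|t|^{q(x)}$ are convex, $I$ is convex, and it is continuous (indeed $I\in C^1(X,\mathbb{R})$); so $D$ is closed and convex. It is also $\|\cdot\|_0$-bounded, because $I(u)\ge\frac{1}{p^{+}}M(u)$ together with Lemma~\ref{lemma2-6} turns $I(u)\le t_0$ into a bound $\|u\|_0\le\max\{1,(p^{+}t_0)^{1/p^{-}}\}$. In the reflexive space $X$, $D$ is therefore weakly compact. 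Moreover $\Phi_\lambda$ is weakly lower semicontinuous: the convex continuous functional $I$ is weakly l.s.c., while $J$ is weakly continuous by the compact embedding $X\hookrightarrow\hookrightarrow L^{r(\cdot)}(\Omega)$ of Theorem~\ref{th2-8} combined with the continuity of $v\mapsto\int_\Omega|v|^{r(x)}/r(x)\,\diff x$ on $L^{r(\cdot)}(\Omega)$. Hence $\Phi_\lambda$ attains its infimum over $D$ at some $u_\lambda\in D$; using $I(|v|)\le I(v)$ (from $\big||v(x)|-|v(y)|\big|\le|v(x)-v(y)|$) and $J(|v|)=J(v)$, I may replace $u_\lambda$ by $|u_\lambda|\in D$ and so assume $u_\lambda\ge0$.

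Finally I would check that $u_\lambda$ is nontrivial, has negative energy, and sits in the interior of $D$. Following the proof of Lemma~\ref{Sufficitent.Condition1}, take a nonnegative $\varphi\in C_c^\infty(\Omega)$ with $\varphi\equiv1$ on a ball $B$ with $\overline B\subset U$ and $\varphi\equiv0$ on $\mathbb{R}^N\setminus U$, and put $\delta:=\min\{p^-(U\times\mathbb{R}^N),q^-(U)\}-r^+(U)>0$. The estimates there give, for $\sigma\in(0,1)$, $I(\sigma\varphi)\le2\sigma^{r^+(U)+\delta}I(\varphi)$ and $J(\sigma\varphi)\ge\sigma^{r^+(U)}J(\varphi)$, whence
\[
\Phi_\lambda(\sigma\varphi)\le\sigma^{r^+(U)}\bigl(2\sigma^{\delta}I(\varphi)-\lambda J(\varphi)\bigr),
\]
which is negative, with $I(\sigma\varphi)<t_0$, once $\sigma=\sigma_0$ is small enough. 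Then $w:=\sigma_0\varphi\in D$ and $\Phi_\lambda(u_\lambda)\le\Phi_\lambda(w)<0=\Phi_\lambda(0)$, so $u_\lambda\ne0$; and since $\Phi_\lambda>0$ on $\{I=t_0\}$ we must have $I(u_\lambda)<t_0$, i.e.\ $u_\lambda$ lies in the open set $\{I<t_0\}\subset D$. As $\Phi_\lambda(u_\lambda)=\min_D\Phi_\lambda$, this makes $u_\lambda$ a local minimizer of $\Phi_\lambda$ on $X$, hence $\Phi_\lambda'(u_\lambda)=0$, which (as noted after the definition of $\Phi_\lambda$) means exactly that $(u_\lambda,\lambda)$ solves \eqref{val}. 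This produces the nonnegative nontrivial solution with $\Phi_\lambda(u_\lambda)<0$.

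I expect the main obstacle to be the minimization step: showing that $\Phi_\lambda$ attains its infimum on $D$ requires both the weak compactness of $D$ (resting on the convexity of the modular $I$ and the $\|\cdot\|_0$-bound furnished by Lemma~\ref{lemma2-6}) and the sequential weak continuity of $J$ (resting on the compactness of $X\hookrightarrow L^{r(\cdot)}(\Omega)$ from Theorem~\ref{th2-8}). Once this is in place, the separation on $\{I=t_0\}$ coming from $\lambda<\mu^\ast$ is immediate from \eqref{c1}, and the sign estimate in Step~3 is a verbatim repetition of computations already carried out in the paper, so no further difficulty is anticipated there.
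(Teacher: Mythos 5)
Your proposal is correct and follows essentially the same strategy as the paper: minimize $\Phi_\lambda$ over the sublevel set $D=\{I\le t\}$ for a suitable $t$, use the definition of $\mu^\ast$ together with \eqref{c1} to control the sign of $\Phi_\lambda$ on $\partial D=N_t$, and conclude that the (nonnegative-ized) minimizer lies in the interior and is a nontrivial critical point. The only deviation is in proving $\inf_D\Phi_\lambda<0$: the paper invokes the continuity of $t\mapsto\mu_1(t)$ and Lemma~\ref{Sufficitent.Condition1} to find $t_0<t_\lambda$ with $\mu_1(t_0)<\lambda$ and then evaluates $\Phi_\lambda$ at a maximizer of $J$ on $N_{t_0}$, whereas you plug in the scaled test function $\sigma\varphi$ and redo the power estimates from Lemma~\ref{Sufficitent.Condition1} directly --- a slightly more self-contained route that avoids relying on the continuity of $\mu_1$ and the attainment of $c_1(t_0)$, but the content is the same.
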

\begin{remark}\rm It is worth pointing out in existing works close to our work (e.g. \cite{chung, shimi,MR.2007}) the author assumed $q(x)\geq p(x,x)$ on $\overline{\Omega}$ and used a sublinear type
growth $r^+<p^-$ or a mixed condition $\displaystyle r^-<p^-<r^+.$
It is easy to see that with the additional assumption  $q(x)\geq p(x,x)$ on $\overline{\Omega}$, the condition $r^-<p^-$ implies the condition
$(\textup{A}1)$. That is, we are dealing with a weaker condition for this type of growth.
\end{remark}
The proof of Theorem~\ref{RangeEigenvalues} is similar to that of \cite[Theorem 3.3]{Fan.JMAA.2009} and we only sketch the proof for sake of completeness.
\begin{proof}[Proof of Theorem~\ref{RangeEigenvalues}] Let $\lambda\in (0,\mu^\ast)$.
By Lemma~\ref{Sufficitent.Condition1}, we have $\mu_*=0.$ Since $\mu_1(t)$ is coninuous with respect to $t$ on $(0,+\infty)$ (c.f. \cite[Proposition 2.3]{Fan.JMAA.2009}), $\mu_1((0,+\infty))$ is connected. Thus, we find $t_\lambda>0$ such that
$$\lambda\leq\mu_1(t_\lambda)=\frac{t_\lambda}{c_1(t_\lambda)}.$$
Set $D=\{u\in X:\, I(u)\leq t_\lambda\}$. Then, $D$ is closed, bounded and convex in $X$ and $\partial D=I^{-1}(t_\lambda)=N_{t_\lambda}.$
Invoking \eqref{c1} with $t=t_\lambda$ it follows that for any $u\in\partial D,$
$$\Phi_\lambda(u)\geq t_\lambda-\lambda  c_1(t_\lambda)=t_{\lambda}\left(1-\frac{\lambda c_1(t_\lambda)}{t_\lambda}\right)\ge 0.$$
Since
$\Phi_\lambda:\, D\to\mathbb{R}$ is weakly lower semicontinuous on
$D$ and $D$ is weakly compact, $\Phi_\lambda$ achieves a global
minimum on $D$ at some $w_\lambda\in D$ i.e.,
$$ \Phi_\lambda(w_\lambda)=\inf_{u\in D}\Phi_\lambda(u).$$
We claim that $\Phi_\lambda(w_\lambda)<0$. To this end, invoking
Lemma~\ref{Sufficitent.Condition1} again  we find $t_0\in
(0,t_\lambda)$ such that $\mu_1(t_0)<\lambda$.
 Let $v\in N_{t_0}$ such that $J(v)=c_1(t_0)$. This yields
$$\Phi_\lambda(v)=I(v)-\lambda J(v)=t_0-\lambda c_1(t_0)<0$$
which shows that $\Phi_\lambda (w_\lambda)=\inf_{u\in D}\Phi_\lambda(u)<0$. By letting $u_\lambda=|w_\lambda|$, we deduce that $u_\lambda$ is a local minimizer of $D$ and hence,
 $u_\lambda$ is a nontrivial nonnegative solution of problem~\eqref{val}. The proof is complete.
\end{proof}
%%%%%%%%%%%%%%%%%%%%%%%%%%%%%%%%%%%%%%%%%%%%%%%%%%%%%%%%%%%%%
\subsection{Existence/Nonexistence results with the growth of type II}
%%%%%%%%%%%%%%%%%%%%%%%%%%%%%%%%%%%%%%%%%%%%%%%%%%%%%%%%%%%%%%%%%%%%%
In this part we study the nonexistence of eigenvalue for problem
\eqref{val} with $\alpha>0$ and $\beta>0$ assuming that the functions $p,q$ and $r$ satisfy the
condition
\begin{equation}\tag{G}\label{G}
p^+<r^-\leq r^+<q^-\leq q^+<\frac{Np^-}{N-sp^-}.
\end{equation}
Our main result in this subsection is giving by the following theorem.
\begin{theorem}\label{mainnon}
    Assume that conditions \eqref{P} (with $\Omega$ replaced by $\mathbb{R}^N$) and \eqref{G} are fulfilled and let  $\gamma_0,\gamma_1$ be defined in \eqref{def.gamma0,gamma1}. Then, $\gamma_0,\gamma_1\in (0,\infty)$ and any $\lambda \in (\gamma_1,\infty)$ is
    an eigenvalue of problem \eqref{val} which admits a nonnegative eigenfunction and any $\lambda \in (0,\gamma_0)$ is
    not an eigenvalue of problem \eqref{val}.
\end{theorem}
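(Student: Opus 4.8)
\textbf{Proof proposal for Theorem~\ref{mainnon}.}

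The plan is to split the statement into three claims and handle them in order: (a) $\gamma_0,\gamma_1\in(0,\infty)$; (b) every $\lambda>\gamma_1$ is an eigenvalue with a nonnegative eigenfunction; (c) no $\lambda\in(0,\gamma_0)$ is an eigenvalue. For (a), finiteness is immediate by testing the quotients $I/J$ and $I_0/J_0$ at any fixed nonzero $u\in X$. Positivity of $\gamma_0$ is the crucial point: under \eqref{G} we have $r^+<q^-$ and $p^+<r^-$, so $r(x)$ lies strictly between $p^-$ (hence $p(x,y)$) and $q^-$ (hence $q(x)$) at every point; by the Young-type inequality $|u|^{r(x)}\le \varepsilon\big(|u|^{p(x)}+|u|^{q(x)}\big)+C_\varepsilon$ pointwise, combined with interpolation on the modular level and Theorem~\ref{th2-8} (to control the $L^{r(\cdot)}$ norm by $\|\cdot\|_0$, hence by $I_0(u)$ when $I_0(u)$ is large, and by $\alpha\int|u|^{p(x)}+\beta\int|u|^{q(x)}$ when $I_0(u)$ is small), one shows $J_0(u)\le \tfrac12\gamma_0^{-1}I_0(u)$ is impossible to violate; more directly, if $\gamma_0=0$ there is a sequence $u_n$ with $I_0(u_n)=1$ and $J_0(u_n)\to\infty$, which contradicts the embedding $X\hookrightarrow L^{r(\cdot)}(\Omega)$ together with the fact that $I_0(u_n)=1$ bounds both $\|u_n\|_0$ and $\int|u_n|^{q(x)}$, and $r<q^-$ lets one bound $\int|u_n|^{r(x)}$ via the $q$-term on the set where $|u_n|\ge 1$ and trivially elsewhere. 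By Lemma~\ref{relations.lambda1}, $\gamma_0>0\iff\gamma_1>0$, so (a) follows.

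For (b), fix $\lambda>\gamma_1$ and work with $\Phi_\lambda=I-\lambda J$ on $X$ (with norm $\|\cdot\|_0$). The key structural fact from \eqref{G} is the growth ordering $p^+<r^-\le r^+<q^-$: for $\|u\|_0$ large the dominant term in $I(u)$ is the $\beta\int|u|^{q(x)}/q(x)$ piece (growing like $\|u\|_0^{q^-}$ by Lemma~\ref{fmod}-type estimates, since $q>r$), while $\lambda J(u)$ grows at most like $\|u\|_{L^{r(\cdot)}}^{r^+}\lesssim\|u\|_0^{r^+}$ with $r^+<q^-$; hence $\Phi_\lambda$ is coercive. It is also weakly lower semicontinuous: $I$ is convex and strongly continuous (the nonlocal term is convex in $u$ and lsc by Fatou, and $\alpha\int|u|^{p(x)}/p(x)+\beta\int|u|^{q(x)}/q(x)$ is weakly continuous on $X$ since $X\hookrightarrow\hookrightarrow L^{p(\cdot)}$ and $X\hookrightarrow\hookrightarrow L^{q(\cdot)}$ — here I use $q^+<Np^-/(N-sp^-)\le p^*_s$), and $J$ is weakly continuous by the compact embedding $X\hookrightarrow\hookrightarrow L^{r(\cdot)}(\Omega)$ of Theorem~\ref{th2-8}. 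Thus $\Phi_\lambda$ attains a global minimum at some $u_\lambda$. To see $u_\lambda\ne 0$: since $\lambda>\gamma_1=\inf I(v)/J(v)$, pick $w$ with $I(w)<\lambda J(w)$, i.e. $\Phi_\lambda(w)<0$, so $\inf\Phi_\lambda<0=\Phi_\lambda(0)$ and $u_\lambda\ne 0$. Replacing $u_\lambda$ by $|u_\lambda|$ does not increase $I$ (the nonlocal term satisfies $\big||u|(x)-|u|(y)\big|\le |u(x)-u(y)|$) and does not change $J$, so we may take $u_\lambda\ge 0$. Finally $u_\lambda$ is a critical point of $\Phi_\lambda\in C^1(X,\mathbb R)$, hence $(u_\lambda,\lambda)$ solves \eqref{val} and $\lambda\in\Lambda$.

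For (c), suppose $\lambda\in(0,\gamma_0)$ is an eigenvalue with eigenfunction $u\in X\setminus\{0\}$. Testing the weak formulation with $v=u$ gives exactly $I_0(u)=\lambda J_0(u)$, whence $\lambda=I_0(u)/J_0(u)\ge\gamma_0$, contradicting $\lambda<\gamma_0$. Hence no such $\lambda$ is an eigenvalue. The main obstacle is part (a), specifically proving $\gamma_0>0$: one must genuinely exploit the double-sided sandwich $p^+<r^-\le r^+<q^-$ in \eqref{G} — neither the $p$-term nor the $q$-term alone controls $\int|u|^{r(x)}$ uniformly, and the nonlocal term carries no zeroth-order information, so the argument must combine the coercivity furnished by the $q$-term at large amplitude with the $L^{p(\cdot)}$-bound from the embedding at small amplitude, patched together pointwise in $x$. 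Once $\gamma_0>0$ is secured, (b) and (c) are the standard direct-method and test-function arguments sketched above.
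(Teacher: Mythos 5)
Your overall strategy coincides with the paper's: a modular comparison for $\gamma_0>0$, the direct method applied to $\Phi_\lambda$ for $\lambda>\gamma_1$ with $|w_\lambda|$ furnishing nonnegativity, and the test $v=u$ giving $\lambda=I_0(u)/J_0(u)\geq\gamma_0$ for the nonexistence range. Part (c) and the skeleton of (b) are exactly the paper's argument. However, your coercivity step in (b) is not sound as written: it rests on the claim that $\beta\int_\Omega |u|^{q(x)}/q(x)\,\diff x$ grows like $\|u\|_0^{q^-}$ as $\|u\|_0\to\infty$. That lower bound is false --- $\|u\|_0$ is only the Gagliardo seminorm and does not control $\|u\|_{L^{q(\cdot)}(\Omega)}$ from below (think of highly oscillatory $u$ of small amplitude). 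Without it the comparison collapses, because the surviving lower bound $I(u)\gtrsim \|u\|_0^{p^-}$ loses against $\lambda J(u)\lesssim \|u\|_0^{r^+}$, since \eqref{G} forces $p^-\leq p^+<r^-\leq r^+$. The repair is the paper's proof of \eqref{lim.infty}: use $\|u\|_{L^{r(\cdot)}(\Omega)}\leq C\min\{\|u\|_{L^{q(\cdot)}(\Omega)},\|u\|\}$ (the first bound coming from $r^+<q^-$ on the bounded set $\Omega$, not from Theorem~\ref{th2-8}) and split into two cases: if $\|u_n\|_{L^{q(\cdot)}(\Omega)}$ stays bounded then $J(u_n)$ is bounded while $I(u_n)\geq\frac{1}{p^+}\|u_n\|^{p^-}\to\infty$; if $\|u_n\|_{L^{q(\cdot)}(\Omega)}\to\infty$ then $\frac{1}{q^+}\|u_n\|_{L^{q(\cdot)}(\Omega)}^{q^-}$ beats $C\|u_n\|_{L^{q(\cdot)}(\Omega)}^{r^+}$ because $r^+<q^-$. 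This case split is essential, not cosmetic.

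On $\gamma_0>0$: your first route, $|u|^{r(x)}\leq\varepsilon\bigl(|u|^{p(x)}+|u|^{q(x)}\bigr)+C_\varepsilon$, cannot work (the additive constant dominates for small $u$), and the normalization $I_0(u_n)=1$ in your contradiction argument is unavailable since $I_0$ and $J_0$ are not homogeneous in the variable-exponent setting. The argument that does work --- and is the paper's one-line proof --- is the exact pointwise inequality $|u(x)|^{r(x)}\leq |u(x)|^{p(x)}+|u(x)|^{q(x)}$ (the $p$-term covers $|u|\leq 1$ since $p(x)\leq p^+<r(x)$, the $q$-term covers $|u|\geq 1$ since $r(x)\leq r^+<q(x)$), which upon integration gives $J_0(u)\leq I_0(u)$, i.e.\ $\gamma_0\geq 1$ when $\alpha=\beta=1$; no embedding theorem is needed because both zeroth-order terms sit inside $I_0$. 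You do gesture at this pointwise patching at the end, so here the issue is cleanup rather than a missing idea; the coercivity gap is the one that genuinely requires the paper's lemma.
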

In the rest of this subsection, on $X$ we will make use of the equivalent norm
$$\|u\|:=\inf\left\{\lambda> 0:\
\int_{\mathbb{R}^N \times \mathbb{R}^N}\frac{|u(x)-u(y)|^{p(x,y)}}{\lambda^{p(x,y)}|x-y|^{N+sp(x,y)}}\diff x\diff y+\int_{\Omega}\left|\frac{u}{\lambda}\right|^{p(x)}\diff x\leq 1\right\}.$$
We now prove Theorem~\ref{mainnon} by adapting ideas used in \cite{Mih-Rad.MM2008}. In the rest of this section, we always that assumptions of Theorem \ref{mainnon} are fulfilled and for simplicity and clarity of our arguments, we just take $\alpha=\beta=1$.
\begin{lemma}
 It holds that
$$\gamma_1,\gamma_0>0.$$
\end{lemma}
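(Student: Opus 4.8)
The goal is to show $\gamma_1,\gamma_0>0$ under hypothesis \eqref{G}, where $\gamma_1=\inf_{u\neq 0} I(u)/J(u)$ and $\gamma_0=\inf_{u\neq 0} I_0(u)/J_0(u)$. By \eqref{relation.gamma0-gamma1} it suffices to prove $\gamma_1>0$; equivalently, there is a constant $c>0$ with $J(u)\le c\,I(u)$ for all $u\in X$. The key structural fact provided by \eqref{G} is that $p^+<r^-\le r^+<q^-$, so the exponent $r(\cdot)$ lies strictly between $p(\cdot)$ and $q(\cdot)$. This means $|u|^{r(x)}$ can be dominated by a convex combination of $|u|^{p(x)}$ and $|u|^{q(x)}$: for every $x$ and every $t\ge 0$, Young's inequality (or simply splitting according to whether $t\le 1$ or $t>1$) gives $t^{r(x)}\le t^{p(x)}+t^{q(x)}$, hence $t^{r(x)}\le C\bigl(t^{p(x)}+t^{q(x)}\bigr)$ pointwise with a universal constant. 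Integrating over $\Omega$ and absorbing the factors $1/r(x),1/p(x),1/q(x)$ into constants, we obtain
\begin{equation*}
J(u)=\int_\Omega\frac{|u|^{r(x)}}{r(x)}\diff x\le C_1\left(\int_\Omega\frac{|u|^{p(x)}}{p(x)}\diff x+\int_\Omega\frac{|u|^{q(x)}}{q(x)}\diff x\right)\le C_1\,I(u),
\end{equation*}
since with $\alpha=\beta=1$ the right-hand side is exactly (part of) $I(u)$, and the Gagliardo term in $I(u)$ is nonnegative. This immediately yields $\gamma_1\ge 1/C_1>0$, and then $\gamma_0>0$ from \eqref{relation.gamma0-gamma1}.

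**Carrying it out.** First I would record the elementary pointwise inequality: for $a,b$ with $1<a\le r\le b$ (here $a=p(x)$, $b=q(x)$, $r=r(x)$) and any $t\ge 0$, one has $t^r\le \max\{t^a,t^b\}\le t^a+t^b$; this uses only that $t\mapsto t^\sigma$ is monotone in $\sigma$ on $[0,1]$ and on $[1,\infty)$ separately. Condition \eqref{G} guarantees $p(x,x)\le r(x)$ is false in general — wait, rather it guarantees $r^-\le r(x)\le r^+$ and $p^+<r^-$, $r^+<q^-$, so indeed $p(x,x)\le p^+<r^-\le r(x)\le r^+<q^-\le q(x)$ for all $x\in\overline\Omega$; thus the pointwise hypothesis $p(x)<r(x)<q(x)$ holds uniformly. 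Then apply the inequality with $t=|u(x)|$, integrate, and use $\tfrac1{r(x)}\le\tfrac{1}{r^-}$, $\tfrac1{p^+}\le\tfrac{1}{p(x)}$, $\tfrac1{q^+}\le\tfrac1{q(x)}$ to convert back to the modular integrals appearing in $I(u)$. Finally invoke \eqref{relation.gamma0-gamma1}, namely $\frac{\min\{p^-,q^-\}}{r^+}\gamma_1\le\gamma_0\le\frac{\max\{p^+,q^+\}}{r^-}\gamma_1$, to transfer positivity (and finiteness, which is clear since one may test with any fixed nonzero $u$) from $\gamma_1$ to $\gamma_0$.

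**Main obstacle.** Honestly there is no serious obstacle: the lemma is purely a consequence of the interpolation-type exponent ordering in \eqref{G} together with the fact that both the $p(\cdot)$ and $q(\cdot)$ modular terms are genuinely present in $I$ (because $\alpha=\beta=1$). The only point requiring a word of care is that one must not attempt to bound $J$ by the Gagliardo seminorm alone — there is no Poincaré inequality available (as the authors stress in the introduction) — so the argument must route through the $\alpha\int|u|^{p(x)}+\beta\int|u|^{q(x)}$ terms, which is exactly why the hypothesis $\alpha,\beta>0$ (here normalized to $1$) is used. If one wanted $\gamma_0,\gamma_1<\infty$ as well, simply evaluate the quotients at a single admissible test function, e.g. any $u\in C_c^\infty(\Omega)\setminus\{0\}$, for which all integrals are finite and $J_0(u)>0$.
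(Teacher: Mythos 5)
Your proposal is correct and is essentially the paper's own argument: both rest on the pointwise bound $t^{r(x)}\le t^{p(x)}+t^{q(x)}$ forced by \eqref{G}, which gives $J_0(u)\le I_0(u)$ (equivalently $J(u)\le C\,I(u)$), after which positivity is transferred between $\gamma_0$ and $\gamma_1$ via \eqref{relation.gamma0-gamma1}. The only cosmetic difference is the order: the paper bounds the unweighted modulars to get $\gamma_0\ge 1$ directly, whereas you bound the weighted ones to get $\gamma_1>0$ first.
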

\begin{proof}
By condition \eqref{G} we deduce that
$$\int_{\Omega}|u|^{q(x)}\diff x+\int_{\Omega}|u|^{p(x)}\diff x\geq\int_{\Omega}|u|^{r(x)}\diff x.$$
Thus
 $$
\displaystyle\int_{\mathbb{R}^{N}\times
\mathbb{R}^{N}}\frac{|u(x)-u(y)|^{p(x,y)}}{|x-y|^{N+sp(x,y)}}\diff x\diff y +
\displaystyle \int_{\Omega} |u|^{p(x)}\diff x+\displaystyle
\int_{\Omega} |u|^{q(x)}\diff x\geq\int_{\Omega}|u|^{r(x)}\diff x. $$ This
implies that $\gamma_0>0$. This and \eqref{relation.gamma0-gamma1} imply $\gamma_1>0$ and the proof is complete.
\end{proof}
\begin{lemma} It holds that
    \begin{equation}\label{lim.infty}
    \displaystyle \lim_{\|u\|\to+\infty} \frac{I(u)}{J(u)}=+\infty
    \end{equation}
    and
    \begin{equation}\label{lim.0}
\displaystyle \lim_{\|u\|\to 0^+} \frac{I(u)}{J(u)}=+\infty.
    \end{equation}
\end{lemma}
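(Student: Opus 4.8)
The plan is to estimate $I(u)$ from below and $J(u)$ from above using the modular/norm comparison inequalities. Recall that $I(u)$ contains the three terms coming from the Gagliardo seminorm, the $L^{p(\cdot)}$ part, and the $L^{q(\cdot)}$ part, while $J(u)=\int_\Omega |u|^{r(x)}/r(x)\,\diff x$. Since we are now using the norm $\|\cdot\|$ built from the seminorm modular plus $\int_\Omega|\cdot|^{p(x)}$, the first two terms of $I$ are controlled precisely by $\|u\|$ via an analogue of Lemma~\ref{lemma2-6} (or Lemma~\ref{fmod}): when $\|u\|>1$ their sum is at least $\tfrac{1}{\max\{p^+\}}\|u\|^{p^-}$, and when $\|u\|<1$ it is at least $\tfrac{1}{\max\{p^+\}}\|u\|^{p^+}$ (with the obvious constants). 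This gives the needed lower bounds $I(u)\gtrsim \|u\|^{p^-}$ for $\|u\|$ large and $I(u)\gtrsim \|u\|^{p^+}$ for $\|u\|$ small.

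For \eqref{lim.infty}: when $\|u\|\to+\infty$, by Theorem~\ref{th2-8} we have $\|u\|_{L^{r(\cdot)}(\Omega)}\le C\|u\|$, hence by the modular estimates on $L^{r(\cdot)}$ (Proposition~A.1) we get $J(u)\le \tfrac{1}{r^-}\int_\Omega|u|^{r(x)}\,\diff x\le \tfrac{1}{r^-}\max\{\|u\|_{L^{r(\cdot)}}^{r^-},\|u\|_{L^{r(\cdot)}}^{r^+}\}\le C'\|u\|^{r^+}$ once $\|u\|$ is large. Therefore
\[
\frac{I(u)}{J(u)}\ge \frac{c\,\|u\|^{p^-}}{C'\,\|u\|^{r^+}}=c''\,\|u\|^{p^--r^+},
\]
and since condition \eqref{G} gives $p^+<r^-\le r^+$, in particular $p^-\le p^+<r^-\le r^+$ so the exponent $p^--r^+<0$; wait — that makes the right side tend to $0$, so this crude bound is the wrong way. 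The correct move is to keep the $\int_\Omega|u|^{q(x)}$ term of $I$: by \eqref{G} we have $q^->r^+$, so $\int_\Omega|u|^{q(x)}\,\diff x$ dominates $\int_\Omega|u|^{r(x)}\,\diff x$ up to additive constants (as in the previous lemma), and on the set where $|u|$ is large the $q$-power beats the $r$-power. Concretely, split $\Omega=\{|u|\le1\}\cup\{|u|>1\}$; on $\{|u|\le1\}$, $J$ is bounded, while on $\{|u|>1\}$ one uses $|u|^{r(x)}\le |u|^{q(x)}$ and Young's inequality with exponents tuned by $r^+<q^-$ to absorb. The cleanest route: for any $\eps>0$, $|u|^{r(x)}\le \eps|u|^{q(x)}+C_\eps$ pointwise (since $r(x)<q^-\le q(x)$), so $J(u)\le \eps\,r^{-1}\!\int_\Omega|u|^{q(x)}+C_\eps|\Omega|\le \eps\,I(u)+C_\eps|\Omega|$. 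Then $I(u)/J(u)\ge I(u)/(\eps I(u)+C_\eps|\Omega|)\to 1/\eps$ as $I(u)\to\infty$ (equivalently $\|u\|\to\infty$, using coercivity of $I$), and letting $\eps\to0$ yields \eqref{lim.infty}.

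For \eqref{lim.0}: when $\|u\|\to0^+$ we instead exploit $p^+<r^-$. By Theorem~\ref{th2-8} and Proposition~A.1, for $\|u\|$ small, $J(u)\le \tfrac{1}{r^-}\|u\|_{L^{r(\cdot)}}^{r^-}\le C\|u\|^{r^-}$ (the smaller power dominates near $0$). Meanwhile the lower bound above gives $I(u)\ge c\|u\|^{p^+}$ for $\|u\|<1$. Hence
\[
\frac{I(u)}{J(u)}\ge \frac{c\,\|u\|^{p^+}}{C\,\|u\|^{r^-}}=c'\,\|u\|^{p^+-r^-}\longrightarrow+\infty
\]
since $p^+-r^-<0$ by \eqref{G}. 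This proves \eqref{lim.0}. The only delicate point is matching modular and norm powers on the correct side of $1$ (large vs. small $\|u\|$) and, for \eqref{lim.infty}, realizing that one must use the $q(\cdot)$-term rather than the seminorm term to dominate $J$; I expect that bookkeeping with the Young-type inequality $|u|^{r(x)}\le\eps|u|^{q(x)}+C_\eps$ to be the main (though routine) obstacle.
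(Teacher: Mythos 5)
Your argument is correct. For \eqref{lim.0} you do exactly what the paper does: bound $I(u)\ge \tfrac{1}{p^+}\|u\|^{p^+}$ from the modular--norm relation for $\|u\|<1$, bound $J(u)\le C\|u\|^{r^-}$ via the embedding $X\hookrightarrow L^{r(\cdot)}(\Omega)$ and Proposition~A.1, and conclude from $p^+<r^-$. For \eqref{lim.infty} you take a genuinely different route. The paper keeps both the $\|u\|^{p^-}$ term and the $L^{q(\cdot)}$-modular term in the numerator, bounds $J(u)$ by powers of $\|u\|_{L^{q(\cdot)}(\Omega)}$ using the embedding $L^{q(\cdot)}(\Omega)\hookrightarrow L^{r(\cdot)}(\Omega)$ (valid since $r^+<q^-$), and then argues by dichotomy on a sequence: either $\|u_n\|_{L^{q(\cdot)}(\Omega)}\to\infty$, in which case $r^+<q^-$ makes the $q$-term win, or it stays bounded, in which case the denominator is bounded while $\|u_n\|^{p^-}\to\infty$. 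You instead use the pointwise absorption inequality $|u|^{r(x)}\le \eps|u|^{q(x)}+C_\eps$ (uniform in $x$ because $r(x)\le r^+<q^-\le q(x)$), which gives $J(u)\le \tfrac{\eps q^+}{r^-}I(u)+C_\eps'$, hence $\liminf I(u)/J(u)\ge r^-/(\eps q^+)$ as $I(u)\to\infty$, and you let $\eps\to 0$. Both are valid; your version avoids the case split and the auxiliary constant $C_r$, at the cost of one $\eps$-quantifier, and it isolates more transparently that the mechanism behind \eqref{lim.infty} is purely the gap $r^+<q^-$ together with coercivity of $I$ in $\|\cdot\|$. The false start in your write-up (the attempt to use only the seminorm term, which you correctly discard) should simply be deleted from a final version.
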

\begin{proof}
    We first note that $r^+<q^-$ and the embedding $X\hookrightarrow L^{r(\cdot)}(\Omega)$ imply that there is $C_{r}>1$ such that
    \begin{equation}\label{4.2.Emb}
   \|u\|_{L^{r(\cdot)}(\Omega)}\leq C_{r}\min\left\{\|u\|_{L^{q(\cdot)}(\Omega)},\|u\|\right\},\ \ \forall u\in X.
    \end{equation}
Using \eqref{4.2.Emb}, for $u\in X$ with $\|u\|>1$ we have
\begin{align}\label{4.2.Est.I/J.infty}
\notag \frac{I(u)}{J(u)}&\geq\frac{\frac{1}{p^+}\|u\|^{p^-}+\frac{1}{q^+}\min\left\{\|u\|_{L^{q(\cdot)}(\Omega)}^{q^+},\|u\|_{L^q(\cdot)(\Omega)}^{q^-}\right\}}{\frac{1}{r^-}\max\left\{\|u\|_{L^r(\cdot)(\Omega)}^{r^+},\|u\|_{L^r(\cdot)(\Omega)}^{r^-}\right\}}\\
&\geq \frac{\frac{1}{p^+}\|u\|^{p^-}+\frac{1}{q^+}\min\left\{\|u\|_{L^{q(\cdot)}(\Omega)}^{q^+},\|u\|_{L^{q(\cdot)}(\Omega)}^{q^-}\right\}}{\frac{C_{r}^{r^+}}{r^-}\max\left\{\|u\|_{L^{q(\cdot)}(\Omega)}^{r^+},\|u\|_{L^{q(\cdot)}(\Omega)}^{r^-}\right\}}.
\end{align}
Let $\{u_n\}\subset X\setminus\{0\}$ be any sequence such that $\|u_n\|\to\infty$ as $n\to\infty.$ If $\|u_n\|_{L^{q(\cdot)}(\Omega}\to\infty$ then, $\frac{I(u_n)}{J(u_n)}\to\infty$ due to \eqref{4.2.Est.I/J.infty} and the fact that $r^+<q^-$. If, up to a subsequence, $\{\|u_n\|_{L^{q(\cdot)}(\Omega}\}$ is bounded, then we also have $\frac{I(u_n)}{J(u_n)}\to\infty$ due to \eqref{4.2.Est.I/J.infty}. That is, \eqref{lim.infty} holds.

Next, we prove \eqref{lim.0}. Invoking \eqref{4.2.Emb} again, for $u\in X$ with $0<\|u\|<1$ we have
\begin{equation}\label{4.2.Est.I/J.0}
\displaystyle  \frac{I(u)}{J(u)}\geq\frac{\frac{1}{p^+}\|u\|^{p^+}}{\frac{1}{r^-}\max\left\{\|u\|_{L^{r(\cdot)}(\Omega}^{r^+},\|u\|_{L^{r(\cdot)}(\Omega}^{r^-}\right\}}\geq \frac{r^-\|u\|^{p^+}}{p^+C_{r}^{r^+}\|u\|^{r^-}}.
\end{equation}
Then, \eqref{lim.0} follows from \eqref{4.2.Est.I/J.0} and the fact that $p^+<r^-$.

\end{proof}
\begin{lemma}
    The infimum $\gamma_1$ is achieved at some $u\in X\setminus\{0\}.$ Moreover, $(u,\gamma_1)$ is an eigenpair of problem~\eqref{val}.
\end{lemma}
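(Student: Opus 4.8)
The plan is to use the direct method in the calculus of variations on the constraint manifold $J_0^{-1}(1)$, exploiting the two limit relations just established. First I would observe that $\gamma_1 = \inf\{I(u) : u \in X,\ J(u) = 1\}$, since $I$ and $J$ are homogeneous of the respective degrees only approximately, but one can still reduce to a constraint: given $u \in X \setminus \{0\}$, scaling $u$ to $tu$ and using \eqref{s_t}-type monotonicity shows the ratio $I(u)/J(u)$ is unchanged under the equivalence class, so it suffices to minimize $I$ over $\mathcal{N} := \{u \in X : J(u) = 1\}$ (equivalently over $J_0^{-1}(r\text{-weighted set})$; I would fix whichever normalization makes the Lagrange computation cleanest, matching \eqref{form.lambda(u)}).

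Next I would take a minimizing sequence $\{u_n\} \subset \mathcal{N}$ with $I(u_n) \to \gamma_1$. By \eqref{lim.infty}, $\{\|u_n\|\}$ is bounded (otherwise $I(u_n)/J(u_n) = I(u_n) \to \infty$, contradicting $I(u_n) \to \gamma_1 < \infty$, where finiteness of $\gamma_1$ comes from plugging in any fixed test function). By reflexivity of $X$, along a subsequence $u_n \rightharpoonup u$ in $X$, and by the compact embedding $X \hookrightarrow\hookrightarrow L^{r(\cdot)}(\Omega)$ from Theorem~\ref{th2-8} (together with $L^{q(\cdot)}$ and $L^{p(\cdot)}$ compactness, valid since $q, p < p_s^*$), we get $u_n \to u$ strongly in $L^{r(\cdot)}(\Omega)$, $L^{q(\cdot)}(\Omega)$, $L^{p(\cdot)}(\Omega)$. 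Hence $J(u_n) \to J(u)$, so $u \in \mathcal{N}$ and in particular $u \neq 0$. The lower semicontinuity of the Gagliardo-type double integral under weak convergence (convexity of $t \mapsto |t|^{p(x,y)}$ plus Fatou, exactly as in the proof of Theorem~\ref{th2-8}) combined with strong $L^{p(\cdot)}$ convergence of the local terms gives $I(u) \le \liminf_n I(u_n) = \gamma_1$, whence $I(u) = \gamma_1$ and the infimum is attained.

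Finally I would verify the Euler-Lagrange statement. Since $I, J \in C^1(X, \mathbb{R})$ (noted after \eqref{Phi_lambda}) and $J'(u) \neq 0$ on $\mathcal{N}$ (indeed $\langle J'(u), u \rangle = \int_\Omega |u|^{r(x)}\,\diff x > 0$ because $u \neq 0$), the Lagrange multiplier rule yields $I'(u) = \mu J'(u)$ for some $\mu \in \mathbb{R}$; pairing with $u$ and using the lower bound $\langle I'(u), u\rangle \ge \min\{p^-,q^-\} I(u) > 0$ shows $\mu > 0$, and the relation \eqref{form.lambda(u)} identifies $\mu = \gamma_1$ in the appropriate normalization (or one argues directly that $\mu = I_0(u)/J_0(u)$ and that this equals $\gamma_1$ because $u$ realizes the infimum). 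This makes $(u, \gamma_1)$ a solution of \eqref{val}, i.e. an eigenpair. The main obstacle is the weak lower semicontinuity of $I$: the nonlocal term with a variable exponent $p(x,y)$ inside the kernel is not a standard convex functional of $\nabla u$, so I would handle it by passing to the a.e.-convergent subsequence and applying Fatou's lemma to the nonnegative integrand $|u_n(x)-u_n(y)|^{p(x,y)}/|x-y|^{N+sp(x,y)}$, precisely mirroring the Fatou argument already used in the proof of Theorem~\ref{th2-8}; the local $\alpha, \beta$ terms are then continuous along the strongly convergent subsequence, so no convexity subtlety arises there.
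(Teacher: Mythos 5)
There is a genuine gap at the very first step. You claim that scaling $u$ to $tu$ ``shows the ratio $I(u)/J(u)$ is unchanged'', and on that basis you replace the minimization of the quotient by the constrained problem $\inf\{I(u):\ J(u)=1\}$. This invariance is false here: $I$ and $J$ are not homogeneous of the same degree (under \eqref{G} one has $p^+<r^-$, so even for constant exponents the numerator scales like $t^{p}$ while the denominator scales like $t^{r}$ with $p\ne r$, and with variable exponents neither functional is homogeneous at all). Consequently $\inf\{I(u):\ J(u)=1\}\geq\gamma_1$ may be strict, a minimizer of the constrained problem need not realize $\gamma_1$, and the Lagrange multiplier you extract is $I_0(u)/J_0(u)$ for that constrained minimizer, which you cannot identify with $\gamma_1$. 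As written, the argument proves attainment of a possibly different constrained infimum, not the statement of the lemma. A symptom of the problem is that you never use \eqref{lim.0}, which the paper establishes precisely for this lemma.

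The fix is to run your (otherwise correct) direct-method machinery on the quotient itself, which is what the paper does: take $u_n\in X\setminus\{0\}$ with $I(u_n)/J(u_n)\to\gamma_1$; boundedness of $\{u_n\}$ follows from \eqref{lim.infty}; pass to $u_n\rightharpoonup u$ and use the compact embedding of Theorem~\ref{th2-8} to get $J(u_n)\to J(u)$, together with weak lower semicontinuity of $I$ (convexity, or your Fatou argument) to get $I(u)\leq\liminf_{n\to\infty} I(u_n)$. The nontriviality of $u$ now genuinely requires \eqref{lim.0}: if $u=0$ then $J(u_n)\to 0$, hence $I(u_n)\to 0$ because the quotient stays bounded, hence $\|u_n\|\to 0$, contradicting \eqref{lim.0}. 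One then gets $\gamma_1=I(u)/J(u)$, and differentiating $t\mapsto I(u+tv)/J(u+tv)$ at $t=0$ yields $I'(u)=\gamma_1 J'(u)$ directly, the multiplier being automatically $I(u)/J(u)=\gamma_1$, so no separate identification step is needed.
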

\begin{proof}
    Let $\{u_n\}\subset X\setminus\{0\}$ such that
    \begin{equation}\label{lim.gamma_1}
    \displaystyle \lim_{n\to\infty} \frac{I(u_n)}{J(u_n)}=\gamma_1>0.
    \end{equation}
From this and \eqref{lim.infty} it follows that $\{u_n\}$ is bounded in $X$. Thus, up to a subsequence we have $u_n\rightharpoonup u$ in $X$. Since $X\hookrightarrow\hookrightarrow L^{r(\cdot)}(\Omega)$, we easily deduce that
\begin{equation}\label{4.2.lim.J(u_n)}
\lim_{n\to\infty} J(u_n)=J(u).
\end{equation}
On the other hand, the continuity and the convexity of $I$ on $X$ imply that $I$ is weakly lower semicontinuous on $X$. Thus, we have
\begin{equation}\label{4.2.lim.I(u_n)}
\underset{n\to\infty}{\lim\inf}\,I(u_n)\geq I(u).
\end{equation}
We claim that $u\ne 0$. Indeed, suppose by contradiction that $u=0$. Then, \eqref{4.2.lim.J(u_n)} gives $\lim_{n\to\infty}J(u_n)=0.$ Combining this and \eqref{lim.gamma_1} we easily obtain that $\lim_{n\to\infty}I(u_n)=0$ and hence,
$\lim_{n\to\infty}\|u_n\|=0.$ From this and \eqref{lim.0} jointly with \eqref{lim.gamma_1}, we arrive at a contradiction. That is, we have shown that $u\ne 0.$ Thus, it follows from \eqref{4.2.lim.J(u_n)} and \eqref{4.2.lim.I(u_n)} that
\begin{equation*}
\displaystyle \lim_{n\to\infty} \frac{I(u_n)}{J(u_n)}\geq \frac{I(u)}{J(u)}.
\end{equation*}
Combining this with \eqref{lim.gamma_1} and the definition of $\gamma_1$ gives
\begin{equation}\label{u-gamma_1}
\gamma_1=\frac{I(u)}{J(u)}.
\end{equation}
It remains to show that $(u,\gamma_1)$ is an eigenpair of problem~\eqref{val}.  From the definition of $\gamma_1$ and \eqref{u-gamma_1} we deduce that for any $v\in X$,
\begin{equation*}
\frac{\diff }{\diff t}\frac{I(u+tv)}{J(u+tv)}\bigg|_{t=0}=0.
\end{equation*}
By a simple computation, the last equality and \eqref{u-gamma_1} yiled
\begin{align*}
\displaystyle \int_{\mathbb{R}^{N}\times
    \mathbb{R}^{N}}&\frac{|u(x)-u(y)|^{p(x,y)-2}(u(x)-u(y))(v(x)-v(y))}{|x-y|^{N+sp(x,y)}}\diff x\diff y
+ \displaystyle \int_{\Omega} |u|^{p(x)-2}uv\diff x\\&+
\displaystyle \int_{\Omega} |u|^{q(x)-2}uv\diff x=\gamma_1
\displaystyle \int_{\Omega} |u|^{r(x)-2}uv\diff x.
\end{align*}
That is, $(u,\gamma_1)$ is an eigenpair of problem~\eqref{val}. The proof is complete.
\end{proof}
\begin{proof}[Proof of Theorem \ref{mainnon} completed] Let $\lambda\in (\gamma_1,\infty).$ Recall that $\Phi_\lambda$ is of class $C^1(X,\mathbb{R})$ and any nontrivial critical point of $\Phi_\lambda$ is a nontrivial solution of problem~\eqref{val}, i.e., $\lambda$ is an eigenvalue of problem~\eqref{val}. By \eqref{lim.infty}, it is clear that $\Phi_\lambda$ is coercive. Moreover, $\Phi_\lambda$ is weakly lower semicontinuous, and hence $\Phi_\lambda$ has a global minimum achieved at some $w_\lambda\in X$. Since $\lambda>\gamma_1$, we find $v_\lambda\in X\setminus\{0\}$ such that $\frac{I(v_\lambda)}{J(v_\lambda)}<\lambda$, i.e., $\Phi_\lambda(v_\lambda)<0$. This yields
$\Phi_\lambda(w_\lambda)<0$ and hence, $w_\lambda\ne 0.$ Putting $u_\lambda=|w_\lambda|$ we deduce that $\Phi_\lambda(u_\lambda)\leq \Phi_\lambda(w_\lambda)$, and hence $u_\lambda$ is also a global minimum point for $\Phi_\lambda$. Thus, $u_\lambda$ is a critical point of $\Phi_\lambda.$ That is, we have shown that any $\lambda\in (\gamma_1,\infty)$ is an eigenvalue of problem~\eqref{val} and problem~\eqref{val} admits a nontrivial nonnegative solution.

Finally, let $\lambda\in (0,\gamma_0).$ Assuming by contradiction that there exists a $u_{\lambda}\in X\setminus \{0\}$
such that
$$\langle I'(u_{\lambda}),v\rangle=\lambda\langle J'(u_{\lambda}),v\rangle, \ \ \forall v\in X. $$
Taking $v=u_{\lambda}$ in the above equality we get
$$\langle I'(u_{\lambda}),u_{\lambda}\rangle=\lambda\langle J'(u_{\lambda}),u_{\lambda}\rangle,$$
i.e.,
$$I_0(u_{\lambda})=\lambda J_0(u_{\lambda}).$$
Thus,
$$\lambda=\frac{I_0(u_{\lambda})}{J_0(u_{\lambda})}\geq \gamma_0,$$
a contradiction. The proof is complete.
\end{proof}
 %%%%%%%%%%%%%%%%%%%%%%%%%%%%%%%%%%%%%%%%%%%%%%%%%%%%%%%%%%%%%%%%%%%%%%

%\subsection{A range of eigenvalues}
%\vspace{0.2cm}
%\noindent\textbf{{\Large \underline{Comment:} Theorem~\ref{mainval} is a special case of Theorem~\ref{RangeEigenvalues}.}}

\appendix
\section*{Appendix. The Lebesgue spaces with variable exponents}\label{Appendix}
In this Appendix, we recall some necessary properties of the Lebesgue spaces with variable exponents. We refer to \cite{fan,radrep} and the references
therein.

Let $\Omega$ be a bounded Lipschitz domain in $\mathbb{R}^{N}$. Consider the set
$$C_+(\overline\Omega)=\{p\in C(\overline\Omega,\mathbb{R}):\, p(x)>1\;{\rm
    for}\; {\rm all}\;x\in\overline\Omega\}.$$ 
For any $p\in
C_+(\overline\Omega)$, denote
$$p^+=\sup_{x\in\Omega}p(x)\qquad\mbox{and}\qquad p^-=
\inf_{x\in\Omega}p(x)$$ 
and define the {\it variable exponent Lebesgue space} $L^{p(\cdot)}(\Omega)$ as
$$L^{p(\cdot)}(\Omega)=\left\{u:\ u\ \mbox{is
    measurable real-valued function},\
\int_\Omega|u(x)|^{p(x)}\;\diff x<\infty\right\}.$$ This vector space is
a Banach space if it is endowed with the {\it Luxemburg norm}, which
is defined by
$$\|u\|_{L^p(\cdot)(\Omega)}=\inf\left\{\mu>0:\;\int_\Omega\left|
\frac{u(x)}{\mu}\right|^{p(x)}\;\diff x\leq 1\right\}.$$
We point out that if $p(x)\equiv p\in [1,\infty)$ then the optimal choice in the above expression is $\mu=\|u\|_{L^{p(\cdot)}(\Omega)}$.

Let $p\in
C_+(\overline\Omega)$ and let $L^{q(\cdot)}(\Omega)$ denote the conjugate space of
$L^{p(\cdot)}(\Omega)$, where $$1/p(x)+1/q(x)=1.$$ If $u\in
L^{p(\cdot)}(\Omega)$ and $v\in L^{q(\cdot)}(\Omega)$ then  the following
H\"older-type inequality holds:
\begin{equation*}\label{Hol}
\left|\int_\Omega uv\;\diff x\right|\leq\left(\frac{1}{p^-}+
\frac{1}{q^-}\right)\|u\|_{L^{p(\cdot)}(\Omega)}\|v\|_{L^{q(\cdot)}(\Omega)}\,.
\end{equation*}
Moreover, if $p_j\in C_+(\overline\Omega)$ ($j=1,2,\ldots, k$) and
$$\frac{1}{p_1(x)}+\frac{1}{p_2(x)}+\cdots +\frac{1}{p_k(x)}=1,$$
then for all $u_j\in L^{p_j(\cdot)}(\Omega)$ ($j=1,\ldots ,k$) we have
\begin{equation*}\label{Hol1}
\left|\int_\Omega u_1u_2\cdots u_k\;\diff x\right|\leq\left(\frac{1}{p_1^-}+
\frac{1}{p_2^-}+\cdots +\frac{1}{p_k^-}\right)\|u_1\|_{L^{p_1(\cdot)}(\Omega}\|u_2\|_{L^{p_2(\cdot)}(\Omega}\cdots \|u_k\|_{L^{p_k(\cdot)}(\Omega}\,.
\end{equation*}

An important role in manipulating the generalized Lebesgue spaces is
played by the {\it modular} of the $L^{p(\cdot)}(\Omega)$ space, which
is the mapping $\rho: L^{p(\cdot)}(\Omega) \rightarrow \mathbb{R} $
defined by
$$\rho(u)=\displaystyle \int_{\Omega}|u|^{p(x)}\diff x.$$

\noindent \textbf{Proposition A.1.}  \textit{It hold that:}
\begin{itemize}
	\item [(i)]$\|u\|_{L^{p(\cdot)}(\Omega)}<1 (=1;>1)\iff\rho(u)<1(=1;>1)$.
	\item [(ii)] $\|u\|_{L^{p(\cdot)}(\Omega)}>1 \iff \|u\|_{L^{p(\cdot)}(\Omega)}^{p^{-}}\leq \rho(u) \leq
	\|u\|_{L^{p(\cdot)}(\Omega}^{p^{+}}$.
	\item [(iii)] $\|u\|_{L^{p(\cdot)}(\Omega)}<1 \iff \|u\|_{L^{p(\cdot)}(\Omega)}^{p^{+}}\leq \rho(u)
	\leq \|u\|_{L^{p(\cdot)}(\Omega)}^{p^{-}}$.
\end{itemize}

\noindent \textbf{Proposition A.2.}  \textit{If $u,u_{n}\in L^{p(\cdot)}(\Omega)$ ($n\in \mathbb{N}$), then the
	following statements are equivalent to each other:}
\begin{itemize}
	\item [(1)] $\displaystyle \lim_{n\rightarrow \infty}
	\|u_{n}-u\|_{L^{p(\cdot)}(\Omega)}=0$.
	\item [(2)] $\displaystyle \lim_{n\rightarrow \infty} \rho(
	u_{n}-u)=0.$
\end{itemize}

%\begin{proposition}\label{pr1}
 %  It hold that:\\
  %  $(i)$ $\|u\|_{L^{p(\cdot)}(\Omega)}<1 (=1;>1)\iff\rho(u)<1(=1;>1)$.\\
   % $(ii)$ $\|u\|_{L^{p(\cdot)}(\Omega)}>1 \iff %\|u\|_{L^{p(\cdot)}(\Omega)}^{p^{-}}\leq \rho(u) \leq
   %\|u\|_{L^{p(\cdot)}(\Omega}^{p^{+}}$.\\
   % $(iii)$ $\|u\|_{L^{p(\cdot)}(\Omega)}<1 \iff \|u\|_{L^{p(\cdot)}(\Omega)}^{p^{+}}\leq \rho(u)    \leq \|u\|_{L^{p(\cdot)}(\Omega)}^{p^{-}}$.
%\end{proposition}
%\begin{proposition}\label{pr2}
 %   If $u,u_{n}\in L^{p(\cdot)}(\Omega)$ ($n\in \mathbb{N}$), then the
   % following statements are equivalent to each other:\\
    %$(1)$ $\displaystyle \lim_{n\rightarrow \infty}     \|u_{n}-u\|_{L^p(\cdot)(\Omega)}=0$.\\
   % $(2)$  $\displaystyle \lim_{n\rightarrow \infty} \rho(u_{n}-u)=0.$
   % \end{proposition}


\begin{thebibliography}{99}

\bibitem{shimi} E. Azroul,
A. Benkirane, M. Shimi, Eigenvalue problems invlving the fractional
$p(x)$-Laplacian, {\it Adv. Oper. Theory} {\bf 4} (2019), 539--555.

\bibitem{an} A. Bahrouni, Trudinger-Moser type inequality and existence of
solution for perturbed non-local elliptic operators with exponential
nonlinearity, {\it Comm. Pure Appl. Anal.} {\bf 16} (2017), 243--252.

\bibitem{br2018}
A. Bahrouni,  V. R\v{a}dulescu, On a new fractional Sobolev space
and applications to nonlocal variational problems with variable
exponent, {\it Discrete Contin. Dyn. Syst. Ser. S. }{\bf11} (2018),
379--389.

\bibitem{an2019}A. Bahrouni, Comparaison and sub-supersolution principles for the fractional $p(x)$-Laplacian, {\it J. Math. Anal. Appl.} \textbf{458} (2018), 1363--1372.

\bibitem{nonlinearity} A. Bahrouni, V. Radulescu, D. Repovs, A weighted anisotropic variant of the  Caffarelli-Kohn-Nirenberg inequality and applications, {\it   Nonlinearity} {\bf 31} (2018), 1516-1534.


\bibitem{chung}N.T. Chung, Eigenvalue Problems for Fractional $p(x,y)$-Laplacian Equations with Indefinite Weight, {\it Taiwanese Journal Of Mathematics} {\bf 23}
(2019), 1153--1173.

\bibitem{8} E. Di Nezza, G. Palatucci, E. Valdinoci, Hitchhiker's guide to the fractional Sobolev spaces, {\it Bull. Sci. Math.}  {\bf 136} (2012), 521--573.

 \bibitem{fan} X. Fan, Q. Zhang, D. Zhao, Eigenvalues of $p(x)$-Laplacian
Dirichlet problem, {\it J. Math. Anal. Appl.} {\bf 302} (2005),
306-317.

\bibitem{Fan.JMAA.2009} X. Fan, Remarks on eigenvalue problems involving the $p(x)$-Laplacian, \textit{J. Math. Anal. Appl.} \textbf{352} (2009), 85--98.

\bibitem{HK} K. Ho, Y.-H. Kim, A-priori bounds and multiplicity of solutions for nonlinear elliptic problems involving the fractional $p(\cdot)$-Laplacian, \textit{Nonlinear Anal.} \textbf{188} (2019), 179--201.

\bibitem{krv2017}
U. Kaufmann, J. D. Rossi, R. Vidal, Fractional Sobolev spaces
with variable exponents and fractional $p(x)$-Laplacians, {\it
    Electron. J. Qual. Theory Differ. Equ.} {\bf  76} (2017), 1--10.

\bibitem{mihai} M. Mihailescu, V. R\v{a}dulescu, Eigenvalue
problems with weight and variable exponent for the Laplace operator,
{\it Anal. Appl.} (Singap.) {\bf 8} (2010), 235-246.

\bibitem{BRR} G. Molica Bisci, V.D. R\u{a}dulescu, R. Servadei, {\it Variational methods for nonlocal fractional problems}, Encyclopedia of Mathematics and its Applications, 162, Cambridge University Press, Cambridge, 2016.

\bibitem{orlicz} W. Orlicz, Uber konjugierte Exponentenfolgen, {\it Studia Math.}, {\bf 3} (1931), 200--212.


\bibitem{radrep} V.D. R\u adulescu, D. Repov\v{s}, {\it  Partial Differential Equations with Variable Exponents: Variational Methods and Qualitative Analysis}, CRC Press, Taylor \& Francis Group, Boca Raton FL, 2015.

\bibitem{MR.2007} M. Mihailescu, V. R\v adulescu, On a nonhomogeneous quasilinear eigenvalue problem in Sobolev spaces with variable exponent,\textit{ Proc. Amer. Math. Soc.} \textbf{135(9)} (2007), 2929--2937.

\bibitem{Mih-Rad.MM2008}M. Mihailescu, V. R\v adulescu, Continuous spectrum for a class of nonhomogeneous differential operators, {\it Manuscripta Math.} \textbf{125} (2008) 157--167.

 \bibitem{zi} V. Zhikov, Averaging of functionals in the calculus of variations and elasticity, {\it Math. USSR
Izv.} \textbf{29} (1987), 33--66.


%\bibitem{55} L. Caffarelli, J.-M. Roquejoffre, and Y. Sire, ``Variational problems for free boundaries for the fractional Laplacian," {\it J. Eur. Math. Soc.}, vol.  12,  pp. 1151-1179, 2010.

%\bibitem{6} L. Caffarelli, S. Salsa, and L. Silvestre, ``Regularity estimates for the solution and the free
% boundary of the obstacle problem for the fractional Laplacian," {\it Invent. Math.}, vol. 171, pp. 425-461, 2008.

%\bibitem{an1} L. Caffarelli and L. Silvestre, ``An extension problem related to the
%fractional Laplacian," {\it Comm. Partial Differential Equations},
%vol. 32, pp. 1245-1260, 2007.
\bibitem{Zeidler}E. Zeidler, \textit{Nonlinear Functional Analysis and its Applications. III}, Springer-Verlag, New York, 1985.

\bibitem{zz2017} C. Zhang, X. Zhang, Renormalized solutions for the fractional
$p(x)$-Laplacian equation with $L^1$, \textit{Nonlinear Anal.} \textbf{190} (2020), 111610.


\end{thebibliography}
\end{document}